\newtheorem{theorem}{Theorem}
\newtheorem{proposition}[theorem]{Proposition}
\newtheorem{assumption}[theorem]{Assumption}
\newtheorem*{assumption*}{Assumption}
\theoremstyle{definition}\newtheorem{remark}[theorem]{Remark}
\newcommand{\R}{\mathbb{R}}
\newcommand{\N}{\mathbb{N}}
\newcommand{\twp}{\hat{u}}
\newcommand{\tw}{u^{TW}}
\title{Finite-Size Effects on Traveling Wave Solutions to Neural Field Equations}
\author{Eva Lang \footnotemark[1]\ \footnotemark[2] \and Wilhelm Stannat \footnotemark[1]\ \footnotemark[2]}
\begin{document}

\maketitle
\renewcommand{\thefootnote}{\fnsymbol{footnote}}
\footnotetext[1]{Institut f\"ur Mathematik, Technische Universit\"at Berlin, D-10623 Berlin, Germany (lang@math.tu-berlin.de, stannat@math.tu-berlin.de)}
\footnotetext[2]{Bernstein Center for Computational Neuroscience, D-10115 Berlin, Germany. }
\renewcommand{\thefootnote}{\arabic{footnote}}

\begin{abstract}
Neural field equations are used to describe the spatiotemporal evolution of the activity in a 
network of synaptically coupled populations of neurons in the continuum limit. Their heuristic 
derivation involves two approximation steps. Under the assumption that each population in the 
network is large, the activity is described in terms of a population average. The discrete 
network is then approximated by a continuum. In this article we make the two approximation steps 
explicit. Extending a model by Bressloff and Newby, we describe the evolution of the activity in 
a discrete network of finite populations by a Markov chain. In order to determine finite-size 
effects - deviations from the mean field limit due to the finite size of the populations in the 
network - we analyze the fluctuations of this Markov chain and set up an approximating system of 
diffusion processes. We show that a well-posed stochastic neural field equation with a noise term 
accounting for finite-size effects on traveling wave solutions is obtained as the strong 
continuum limit.
\end{abstract}

\section{Introduction}

The analysis of networks of neurons of growing size quickly becomes involved from a computational as well as from an analytic perspective when one tracks the spiking activity of every neuron in the network. It can therefore be useful to zoom out from the microscopic perspective and identify a population activity as an average over a certain group of neurons. In the heuristic derivation of such population models it is usually assumed that each of the populations in the network is infinite, such that, in the spirit of the law of large numbers, the description of the activity in each population reduces to a description of the mean.
By considering a spatially extended network and letting the density of populations go to infinity, neural field equations are obtained as the continuum limit of these models. Here we consider the voltage-based neural field equation, which is a nonlocal evolution equation of the form
\begin{equation}
\label{eq:nfe}
\frac{\partial}{\partial t} u(x,t) = -u(x,t) + w\ast F(u(\cdot, t))(x), \qquad x\in \R, t\geq 0
\end{equation}
where $u(x,t)$ describes the average membrane potential in the population at $x$ at time $t$, $w:\R \rightarrow [0,\infty)$ is a kernel describing the strengths of the synaptic connections between the populations, and the gain function $F:\R \rightarrow [0, 1]$ relates the potential to the activity in the population.

Neural field equations were first introduced by Amari \cite{amari} and Wilson and Cowan \cite{wilsoncowan72, wilsoncowan73} and have since been used extensively to study the spatio-temporal dynamics of the activity in coupled populations of neurons. While they are of a relatively simple form, they exhibit a variety of interesting spatio-temporal patterns. For an overview see for example \cite{ermentroutbook, coombesbook, ermentroutreview, bressloffreview, bressloffbook}. In this article we will concentrate on traveling wave solutions, modeling the propagation of activity, that were proven to exist in \cite{ermentroutmcleod}. 

The communication of neurons is subject to noise. It is therefore crucial to study stochastic versions of (\ref{eq:nfe}). While several sources of noise have been identified on the single neuron level, it is not clear how noise translates to the level of populations. Since neural field equations are derived as mean field limits, the usual effects of noise should have averaged out on this level. However, the actual finite size of the populations leads to deviations from the mean field behavior, suggesting finite size effects as an intrinsic source of noise.
 
The (heuristic) derivation of neural field equations involves two approximation steps. First, the local dynamics in each population is reduced to a description of the mean activity. Second, the discrete network is approximated by a continuum. 
In this article we make these two approximation steps explicit.
In order to describe deviations from the mean field behavior for finite population sizes, we set up a Markov chain to describe the evolution of the activity in the finite network, extending a model by Bressloff and Newby \cite{bressloffnewby}. The transition rates are chosen in such a way that we obtain the voltage-based neural network equation in the infinite population limit. We analyze the fluctuations of the Markov chain in order to determine a stochastic correction term describing finite-size effects. In the case of fluctuations around traveling wave solutions, we set up an approximating system of diffusion processes and prove that a well-posed stochastic neural field equation is obtained in the continuum limit.

In order to derive corrections to the neural field equation accounting for finite-size effects, in \cite{bressloffsystemsize}, Bressloff (following Buice and Cowan \cite{buicecowan}) sets up a continuous time Markov chain describing the evolution of the activity in a finite network of populations of finite size $N$. The rates are chosen such that in the limit as $N \rightarrow \infty$ one obtains the usual activity-based network equation. He then carries out a van Kampen system size expansion of the associated master equation in the small parameter $1/N$ to derive deterministic corrections of the neural field equation
in the form of coupled differential equations for the moments.
To first order, the finite-size effects can be characterized as Gaussian fluctuations around the mean field limit.

The model is considered from a mathematically rigorous perspective by Riedler and Buckwar in \cite{riedler}. They make use of limit theorems for Hilbert-space valued piecewise deterministic Markov processes recently obtained in \cite{riedlerthieullenwainrib} as an extension of Kurtz's convergence theorems for jump Markov processes to the infinite-dimensional setting. They derive a law of large numbers and a central limit theorem for the Markov chain, realizing the double limit (number of neurons per population to infinity and continuum limit)
at the same time. 
They formally set up a stochastic neural field equation, but the question of well-posedness is left open.

In \cite{bressloffnewby}, Bressloff and Newby extend the original approach of \cite{bressloffsystemsize} by including synaptic dynamics and consider a Markov chain modeling the activity coupled to a piecewise deterministic process describing the synaptic current (see also section 6.4 in \cite{bressloffbook} for a summary).
In two different regimes, the model covers the case of Gaussian-like fluctuations around the mean-field limit as derived in \cite{bressloffsystemsize}, as well as a situation in which the activity has Poisson statistics as considered in \cite{buicecowanchow}.

Here we consider the question how finite-size effects can be included in the voltage-based neural field equation. 
We take up the approach of describing the dynamics of the activity in a finite-size network by a continuous-time Markov chain and motivate a choice of jump rates that will lead to the voltage-based network equation in the infinite-population limit. 
We derive a law of large numbers and a central limit theorem for the Markov chain.
Instead of realizing the double limit as in \cite{riedler}, we split up the limiting procedure, which in particular allows us to insert further approximation steps. We follow the original approach by Kurtz to determine the limit of the fluctuations of the Markov chain. By linearizing the noise term around the traveling wave solution, we obtain an approximating system of diffusion processes.
After introducing correlations between populations lying close together (cf. section \ref{subsection:correlations}) we obtain a well-posed $L^2(\R)$-valued stochastic evolution equation, with a noise term approximating finite-size effects on traveling waves, which we prove to be the strong continuum limit of the associated network.

The article is structured as follows. We recall how population models can be derived heuristically in section \ref{section:fseinpopmodels} and summarize the work on the description of finite-size effects that can be found in the literature so far. In section \ref{section:MC} we introduce our Markov chain model for determining finite-size effects in the voltage-based neural field equation and prove a law of large numbers and a central limit theorem for our choice of jump rates. We use it to set up a diffusion approximation with a noise term accounting for finite-size effects on traveling wave solutions in section \ref{section:diffusionapproximation}. Finally, in section \ref{section:continuumlimit}, we prove that a well-posed stochastic neural field equation is obtained in the continuum limit.

\subsection*{Assumptions on the Parameters}

As usual, we take the gain function $F:\R \rightarrow [0,1]$ to be a sigmoid function, for 
example $F(x) = \frac{1}{1+ e^{-\gamma(x-\kappa)}}$ for some $\gamma > 0$, $0<\kappa<1$. In 
particular we assume that
\begin{enumerate}[(i)]
	\item $F \geq 0, \lim_{x \downarrow -\infty}F(x)=0, \lim_{x\uparrow\infty}F(x)=1$
	\item $ F(x)-x$ has exactly three zeros $0 < a_1 < a < a_2<1$
	\item $F \in \mathcal{C}^3$ and $F', F''$ and $F'''$ are bounded
	\item  $F' > 0, F'(a_1) < 1, F'(a_2) < 1, F'(a)>1$
\end{enumerate}

Our assumptions on the synaptic kernel $w$ are the following
\begin{enumerate}[(i)]
	\item $w \in \mathcal{C}^1$
	\item $w(x,y)=w(|x-y|)\geq 0$ is nonnegative and homogeneous
	\item $\int_{-\infty}^{\infty} w(x) dx = 1, w_x \in L^1$
\end{enumerate} 

Assumption (iv) on $F$ implies that $a_1$ and $a_2$ are stable fixed points of (\ref{eq:nfe}), while $a$ is an unstable fixed point.
It has been shown in \cite{ermentroutmcleod} that under these assumptions there exists a unique monotone traveling wave solution to (\ref{eq:nfe}) connecting the stable fixed points (and in \cite{chenfengxin}, that traveling wave solutions are necessarily monotone). That is, there exists a unique wave profile $\hat{u}:\R \rightarrow [0,1]$ and a unique wave speed $c \in \R$ such that $u^{TW}(t,x) = \tw_t(x) := \hat{u}(x-ct)$ is a solution to (\ref{eq:nfe}), i.e.
\begin{align*}
-c\partial_x \tw_t (x) = \partial_t \tw_t(x) & = -\tw_t(x) + \int_{-\infty}^{\infty} w(x-y) F(\tw_t(y)) dy \\
& = -\tw_t(x) + w \ast F(\tw_t)(x),
\end{align*}
and 
\[\lim_{x \rightarrow -\infty} \hat{u}(x) = a_1, \qquad \lim_{x\rightarrow\infty} \hat{u}(x) = a_2.\]  
As also pointed out in \cite{ermentroutmcleod}, we can without loss of generality assume that $c\geq 0$.
Note that $\hat{u}_x \in L^2(\R)$ since in the case $c>0$
\begin{align*}
\int \hat{u}_x^2(x) dx & = \int \hat{u}_x(x) \frac{1}{c}\left( \hat{u}(x) - w\ast F(\hat{u})(x)\right) dx \\
& \leq \frac{1}{c} \left( \|\hat{u}\|_{\infty} + \|F\|_{\infty} \int w(x) dx\right) \int \hat{u}_x(x) dx \\
& = (\frac{1}{c}a_2+1) (a_2-a_1),
\end{align*}
and in the case $c=0$,
\[\int \hat{u}_x^2(x) dx  = \int \hat{u}_x(x) \ w_x \ast F(\hat{u})(x) dx \leq \|w_x\|_1 (a_2-a_1).\]


\section{Finite-Size Effects in Population Models}
\label{section:fseinpopmodels}

\subsection{Population Models}
\label{subsection:populationmodels}

In population models, or firing rate models, instead of tracking the spiking activity of every 
neuron in the network, neurons are grouped together and the activity is identified as a 
population average. We start by giving a heuristic derivation of population models, 
distinguishing as usual between an activity-based and a voltage-based regime.

We consider a population of $N$ neurons. We say that a neuron is `active' if it is in the process of firing an action potential such that its membrane potential is larger than some threshold value $\kappa$. If $\Delta$ is the width of an action potential, then a neuron is active at time $t$ if it fired a spike in the time interval $(t-\Delta,t]$. 
We define the population activity at a given time $t$ as the proportion of active neurons,
\[a^N(t) = \frac{\mbox{\# neurons that are active at time $t$}}{N} \in \{ 0, \frac{1}{N}, \frac{2}{N}, \ldots, 1\}.\]
We assume that all neurons in the population are identical and receive the same input. 
If the neurons fire independently from each other, then for a constant input current $I$,
\[a^N(t) \xrightarrow{N\rightarrow \infty} F(I),\]
where $F(I)$ is the probability that a neuron receiving constant stimulation $I$ is active.
In the infinite population limit, the population activity is thus related to the input current via the function $F$, called the \emph{gain function}.
Sometimes one also defines $F$ as a function of the potential $u$, assuming that the potential is proportional to the current as in Ohm's law.
$F$ is typically a nonlinear function. It is usually modeled as a sigmoid, for example
\[F(x) = \frac{1}{1+e^{-\gamma (x-\kappa)}}\]
for some $\gamma>0$ and some threshold $\kappa>0$, imitating the threshold-like nature of spiking activity.

Sometimes a \emph{firing rate} is considered instead of a probability. We define the population firing rate $\lambda^N$ as
\[\lambda^{\delta,N}(t) = \frac{\mbox{\# spikes in the time interval $(t-\delta,t]$}}{\delta N}.\]
If $\delta=\Delta$, then $\Delta \lambda^{\delta,N}(t)=a^N(t)$.
At constant potential $u$, 
$\lim_{\delta\rightarrow 0} \lim_{N\rightarrow\infty} \lambda^{\delta,N}(t) = \lambda(u)$, where $\lambda(u)$ is the single neuron firing rate.
Note that $\lambda \leq \frac{1}{\Delta}$. The firing rate is related to the probability $F(u)$ via
\[F(u) \approx \lambda(u) \Delta. \]

If the stimulus varies in time, then the activity may track this stimulus with some delay such that
\[a(t+\tau_a) = F(u(t))\]
for some time constant $\tau_a$.
Taylor expansion of the left-hand side gives an approximate description of the (infinite population) activity in terms of the differential equation 
\begin{equation}
\tau_a \dot{a}(t) = -a(t)+F(u(t)),
\end{equation}
to which we refer as the \emph{rate equation}.

We now consider a network of $P$ populations, each consisting of $N$ neurons. 
We assume that each presynaptic spike in population $j$ at time $s$ causes a postsynaptic potential 
\[h(t-s)= \frac{1}{N} w_{ij} \frac{1}{\tau_m} e^{-\frac{1}{\tau_m}(t-s)}\]
in population $i$ at time $t$. 
Here the $(w_{ij})$ are weights characterizing the strength of the synaptic connections between populations $i$ and $j$, and $\tau_m$ is the membrane time constant, describing how fast the membrane potential relaxes back to its resting value.

Under the assumption that all inputs add up linearly, the potential in population $i$ at time $t$ is given as
\begin{equation*}
\label{eq:voltage1}
u_i^N(t) = \sum_{j=1}^P w_{ij} \int_{-\infty}^t \frac{1}{\tau_m} e^{-\frac{1}{\tau_m}(t-s)} a_j^N(s) ds.
\end{equation*}
In the infinite population limit we obtain
\begin{equation}
\label{eq:voltage2}
u_i(t) = \sum_{j=1}^P w_{ij} \int_{-\infty}^t \frac{1}{\tau_m} e^{-\frac{1}{\tau_m}(t-s)} a_j(s) ds,
\end{equation}
where
\begin{equation}
\label{eq:rateeq}
\tau_a \dot{a}_j(t) = -a_j(t) + F(u_j(t)).
\end{equation}
The behavior of the coupled system $(u_i, a_i)$ depends on the two time constants, $\tau_m$ and $\tau_a$. We consider two different regimes in which the model can be reduced to just one of the two variables, $u$ or $a$.

\subsubsection*{Case 1: $\tau_m \gg \tau_a \rightarrow 0$}

In this regime we can assume that the activity reacts to changes in input immediately such that $a_j(t) = F(u_j(t))$.
Then (\ref{eq:voltage2}) can be closed in the variables $u_i$ and we obtain
\begin{equation}
\label{eq:voltage3}
u_i(t) = \sum_{j=1}^P w_{ij} \int_{-\infty}^t \frac{1}{\tau_m} e^{-\frac{1}{\tau_m}(t-s)} F(u_j(s)) ds.
\end{equation}
Differentiation yields the system of ordinary differential equations
\begin{equation}
\label{eq:networkvoltage}
\frac{d}{dt} u_i(t) = \frac{1}{\tau_m} \bigg( - u_i(t) + \sum_{j=1}^P w_{ij} F(u_j(t)) \bigg),
\end{equation}
which we will call the \emph{voltage-based neural network equation}.

\subsubsection*{Case 2: $\tau_a \gg \tau_m \rightarrow 0$}

By (\ref{eq:voltage2}), 
\begin{equation*}
\label{eq:activity1}
a_i(t+\tau_a) = F\bigg( \sum_{j=1}^P w_{ij} \int_{-\infty}^t \frac{1}{\tau_m} e^{-\frac{1}{\tau_m}(t-s)} a_j(s) ds \bigg).
\end{equation*}
Letting $\tau_m\rightarrow 0$ we obtain
\begin{equation*}
\label{eq:activity2}
a_i(t+\tau_a) = F\bigg( \sum_{j=1}^P w_{ij} a_j(t) \bigg).
\end{equation*}
Using again that $a_i(t+\tau_a) \approx a_i(t) + \tau_a a_i'(t)$, 
we end up with the system of ordinary differential equations
\begin{equation}
\label{eq:networkactivity}
\tau_a \frac{d}{dt} a_i(t) = -a_i(t) + F\bigg( \sum_{j=1}^P w_{ij} a_j(t) \bigg),
\end{equation}
which we will call the \emph{activity-based neural network equation}.

\subsection{Finite-Size Effects in the Literature}
In \cite{bressloffnewby}, Bressloff and Newby set up a model for the evolution of the activity in a network of finite populations. They define the activity in population $j$ as
\[a^{\delta, N}_j(t) = \frac{\mbox{\# spikes in $(t-\delta, t]$ in population $j$ }}{\delta N },\]
where $\delta$ is a time window of variable size. If $\delta$ is chosen as the width of an action potential $\Delta$, then we obtain our original notion of the activity, $\Delta a^{\Delta, N} = a^N$. Here the activity is modeled as a rate rather than a probability. Note that the number of spikes in the time interval $(t-\delta,t]$ is limited by $n_{max}:= N  \vee \big[\frac{\delta N}{\Delta} \big] $.

They describe the dynamics of $a^{\delta, N}$ by a Markov chain with state space $\{ 0, \frac{1}{\delta N}, ... ,\frac{n_{max}}{\delta N}  \}$ and jump rates
\begin{equation}
\label{eq:jumpratesact}
\begin{split}
q_a^N(x, x+ \frac{1}{\delta N} e_i) & = \frac{1}{\tau_a} \delta N  \lambda (u^N_i(t))  \ \mbox{if $x(i) < \frac{n_{max}}{\delta N}$ }\\
q_a^N(x, x- \frac{1}{\delta N} e_i) & = \frac{1}{\tau_a} \delta N x_i(t),
\end{split}
\end{equation}
where $e_i$ denotes the $i$-th unit vector, where $\lambda(u)$ is the firing rate at potential $u$, related to the probability $F(u)$ via $\Delta \lambda(u) = F(u)$, and where $u^N$ evolves according to (\ref{eq:voltage1}),
\[\dot{u}_i^N(t) = \frac{1}{\tau_m} \Big( -u_i(t) + \sum_{j=1}^P w_{ij} a_j^N(t) \Big)\]
The idea is that the activation rate should be proportional to $\lambda(u)$, while the inactivation rate should be proportional to the activity itself. 
The rates are chosen such that in the limit as $N$ goes to infinity, we obtain the neural rate equation
\[\tau_a \dot{a}_i(t) = - a_i(t) + \lambda(u_i(t)).\]

They consider two regimes. 

\subsubsection*{Case 1: $\delta=1$,  $\tau_a \gg \tau_m \rightarrow 0$}

In the first regime, the size of the time window $\delta$ is fixed, say $\delta=1$. 
If $\tau_a \gg \tau_m \rightarrow 0$, then as in section \ref{subsection:populationmodels}, $u^N_i(t) = \sum_{j=1}^P w_{ij} a_j^{\delta, N}(t)$. The description of the Markov chain can thus be closed in the variables $a^{\delta, N}_i$, leading to the model already considered in \cite{bressloffsystemsize}. In the limit $N\rightarrow\infty$ one obtains the activity-based network equation
\begin{equation}
\label{eq:fseactivitynetwork}\tau_a \dot{a}_i^N(t) = -a_i^N(t) + \lambda\Big( \sum_{j=1}^P w_{ij} a^N_j(t)\Big).
\end{equation}
By formally approximating to order $\frac{1}{N}$ in the associated master equation, they derive a stochastic correction to (\ref{eq:fseactivitynetwork}), leading to the diffusion approximation
\begin{align*}
da^{\delta, N}_i(t) 
& \approx \frac{1}{\tau_a} \bigg( - a^{\delta, N}_i(t) +  \lambda\bigg( w_{ij} \sum_{j=1}^P a^{\delta, N}_j(t)\bigg) \bigg) dt \\
& \qquad +  \frac{1}{\sqrt{\tau_a N}} \bigg( a^{\delta, N}_i(t) +   \lambda \bigg( \sum_{j=1}^P w_{ij} a^{\delta, N}_j(t)\bigg) \bigg)^{\frac{1}{2}} dB_j(t)
\end{align*}
for independent Brownian motions $B_j$.

In \cite{riedler}, Riedler and Buckwar rigorously derive a law of large numbers and a central limit theorem for the sequence of Markov chains as $N$ tends to infinity. 
Note that the nature of the jump rates is such that the process has to be `forced' to stay in its natural domain $[0,\frac{n_{max}}{N}]$ by setting the jump rate to $0$ at the boundary. As they point out, this discontinuous behavior is difficult to deal with mathematically.
They therefore have to slightly modify the model and allow the activity to be larger than $\frac{n_{max}}{N}$.
They embed the Markov chain into $L^2(D)$ for a bounded domain $D\subset \R^d$ and derive the LLN in $L^2(D)$ and the CLT in the Sobolev space $H^{-\alpha}(D)$ for some $\alpha>d$.

\subsubsection*{Case 2: $\delta =\frac{1}{N}, \tau_m \gg \tau_a$}

In the second regime, the size of the time window $\delta$ goes to $0$ as $N$ goes to infinity such that $\delta N=1$.
In this case,
\[a^{\delta, N}_i(t) = \frac{\mbox{\# spikes in $(t-\delta,t]$ in pop. $i$}}{\delta N} \approx \frac{\lambda(u_i^N(t)) \delta N}{\delta N}=\lambda(u_i^N(t)).\]
They show that at fixed voltage $u$, the stationary distribution of the activity $a^{\delta,N}$ evolving according to (\ref{eq:jumpratesact}) is approximately Poisson with rate $\lambda(u)$. This corresponds to the regime considered in \cite{buicecowanchow}.

In the limit $N\rightarrow\infty$, $a_j(t) = \lambda(u_j(t))$ and the system reduces to the voltage-based network equation
\[\tau_m \dot{u}_j(t) = -u_j(t) + \sum_{j=1}^P w_{ij} \lambda(u_j(t)). \]

\subsubsection*{Case 3: $\delta=\Delta$, $\tau_m \gg \tau_a \rightarrow 0$ }

The third regime has not been considered explicitly in \cite{bressloffnewby}. It is the one which is relevant for us.

We go back to our original definition of the activity and fix the time window $\delta$ to be the length of an action potential $\Delta$.
We assume that the potential evolves slowly, $\tau_m \gg 0$. Speeding up time, we define
\[\tilde{u}_i^N(t) = u_i^N(t\tau_m).\]
Then
\begin{align*}
	\tilde{u}_i^N(t) 
	& = \sum_{j=1}^P w_{ij} \int_{-\infty}^{t\tau_m} \frac{1}{\tau_m} e^{-\frac{1}{\tau_m}(t\tau_m -s)} a_j^N(s) ds \\
	& =  \sum_{j=1}^P w_{ij} \int_{-\infty}^t e^{-(t-s)} a_j^N(s\tau_m) ds
\end{align*}
For some large $n$,
\begin{align*}
\tilde{u}_i^N(t) 
& \approx \sum_{j=1}^P w_{ij} \sum_{k=-\infty}^{[tn]-1} e^{-(t-\frac{k}{n})} \int_{\frac{k}{n}}^{\frac{k+1}{n}} a_j^N(s\tau_m) ds.
\end{align*}
\sloppy{The potentials $\tilde{u}_i^N$ therefore only depend on the time-averaged activities given for ${\frac{k}{n}\leq t < \frac{k+1}{n}}$ as
\[\tilde{a}_i^N(t) = n \int_{\frac{k}{n}}^{\frac{k+1}{n}} a_i^N(s\tau_m) ds.\]
We have
\begin{equation}
\label{eq:fsevoltage2}
\tilde{u}_i^N(t) 
 \approx \sum_{j=1}^P w_{ij}  \sum_{k=-\infty}^{[tn]-1}\frac{1}{n}  e^{-(t-\frac{k}{n})} \tilde{a}_j^N(\frac{k}{n}) \approx \sum_{j=1}^P w_{ij}  \int_{-\infty}^t  e^{-(t-s)} \tilde{a}_j^N(s) ds.
\end{equation}
If $\tau_a \ll \tau_m$, the activity relaxes to its stationary distribution quickly on this time scale. At fixed voltage $u$, under the stationary distribution $\nu(u)$, 
\[\tilde{a}_i^N(\frac{k}{n}) = n \int_{\frac{k}{n}}^{\frac{k+1}{n}} a_i^N(s\tau_m) ds \approx E_{\nu(u)}( a_i^N(\frac{k}{n} \tau_m) )= F(u),\]
with equality if $N\rightarrow\infty$. 
If $u$ is time-varying, then 
differentiation of (\ref{eq:fsevoltage2}) yields
\begin{equation}
\label{eq:actvoltagebased}
\begin{split}
	\frac{d}{dt} \tilde{a}_i(t) 
	& = \frac{d}{dt} F(\tilde{u}_i(t)) \\
	& = F'(\tilde{u}_i(t)) \Big(-\tilde{u}_i(t) + \sum_{j=1}^P w_{ij} F(\tilde{u}_j(t)) \Big)\\
	& = F'(F^{-1}(\tilde{a}_i(t))) \Big( - F^{-1}(\tilde{a}_i(t)) + \sum_{j=1}^P w_{ij} \tilde{a}_j(t) \Big).
\end{split}
\end{equation}

If $N <\infty$, then the finite size of the populations causes deviations from (\ref{eq:actvoltagebased}).
In order to determine these finite-size effects, in the next section we will set up a Markov chain $X^{P,N}$ to describe the evolution of the time-averaged activity $\tilde{a}^N$.

\section{A Markov Chain Model for the Activity }
\label{section:MC}

We describe the evolution of the time-averaged activity by a Markov chain $X^{P,N}$ with state space $E^{P,N} = \{0, \frac{1}{N}, \frac{2}{N}, \ldots, 1\}^P$.
We define the jump rates as
\begin{equation}
\label{eq:ourjumprates}
\begin{split}
	q^{P,N}(x, x+\frac{1}{N}e_i) & = N F'(F^{-1}(x_i)) \bigg(-F^{-1}(x_i)+\sum_{j=1}^P w_{ij} x_j\bigg)_+\\
	q^{P,N}(x, x-\frac{1}{N}e_i) & = N F'(F^{-1}(x_i)) \bigg(- F^{-1}(x_i)+\sum_{j=1}^P w_{ij} x_j\bigg)_-
\end{split}
\end{equation}
where for $x\in\R$, 
$x_+ := x\vee 0$, $x_- := -x \vee 0$, and where $e_i$ denotes the $i$-th unit vector.

The idea behind this choice is the following: the time-averaged activity tends to jump up (down) if the potential in the population, which is approximately given by $F^{-1}(x_i)$, is lower (higher) than the input from the other populations, which is given by $\sum_{j=1}^P w_{ij} x_j$. The probability that the activity jumps down (up) when the potential is lower (higher) than the input is assumed to be negligible. 
The jump rates are proportional to the difference between the two quantities, scaled by the factor $F'(F^{-1}(x_i))$. 
They are therefore higher in the sensitive regime where $F'\gg 1$, that is, where small changes in the potential have large effects on the activity.
If $a^N_i = F(u^N_i)$ in all populations $i$, then the system is in balance.

Note that the state space is naturally bounded since $\lim_{x \uparrow 1} F^{-1}(x) = \infty$ and \\ $\lim_{x \downarrow 0} F^{-1}(x) = - \infty$, such that $( -F^{-1}(x_k) + \sum_{l=1}^P w_{kl} x_l )_{+} = 0$ for $x$ with $x_k = 1-\frac{1}{N}$, $x_l \leq 1- \frac{1}{N}$, when $N\geq N_0$ is large enough, and similarly at $0$. 

We will see in Proposition \ref{prop:LLN} below that the Markov chain converges to the solution of (\ref{eq:actvoltagebased}) as the size of the populations $N$ goes to infinity.

In \cite{bressloffsystemsize} a different choice of jump rates was suggested in analogy to (\ref{eq:jumpratesact}):
\begin{align*}
	\tilde{q}(x, x+\frac{1}{N}e_i) & = N F'(F^{-1}(x_i)) \sum_{j=1}^P w_{ij} x_j\\
	\tilde{q}(x, x-\frac{1}{N}e_i) & = N F'(F^{-1}(x_i))  F^{-1}(x_i).
\end{align*}
Also this choice leads to (\ref{eq:actvoltagebased}) in the limit. In this picture, the jump rates are high in regions where the activity is high. Since, as explained above, one should think of the Markov chain as governing a slowly varying time-averaged activity, (\ref{eq:ourjumprates}) seems like a more natural choice.

The generator of $Q^{P,N}$ of $X^{P,N}$ is given for bounded measurable $f:E^{P,N} \rightarrow \R$ by
\begin{align*}
 Q^{P,N}f (x)  & = N \sum_{k=1}^P F'(F^{-1}(x_k)) \bigg( \Big(-F^{-1}(x_k) + \sum_{j=1}^P w_{kj} x_j\Big)_+ (f(x+\tfrac{1}{N}e_k)-f(x)) \\
& \qquad {} + \Big(-F^{-1}(x_k) + \sum_{j=1}^P w_{kj} x_j\Big)_- (f(x-\tfrac{1}{N}e_k) - f(x)) \bigg).
\end{align*}
Let $N_0$ be such that the jump rates out of the interval $\big[\frac{1}{N_0}, 1-\frac{1}{N_0}\big]$ are 0.
\begin{proposition}
\label{prop:LLN}
Let $X^P$ be the (deterministic) Feller process on $[1/N_0, 1-1/N_0]^P$ with generator
\[ L^Pf(x) = \sum_{k=1}^P F'(F^{-1}(x_k)) \Big( - F^{-1}(x_k) + \sum_{j=1}^P w_{kj} x_j \Big) \partial_kf(x).\]
If $X^{P,N}(0) \xrightarrow[N\rightarrow\infty]{d} X^P(0) $ , then $X^{P,N} \xrightarrow[N \rightarrow\infty]{d} X^P$ on the space of c\`adl\`ag functions \\ $D([0, \infty),[1/N_0,1-1/N_0]^P)$ with the Skorohod topology (where $\xrightarrow{d}$ denotes convergence in distribution).
\end{proposition}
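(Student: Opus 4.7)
The assertion is a law of large numbers for a sequence of jump Markov processes on a compact space, so the natural strategy is Kurtz's generator convergence theorem (Ethier--Kurtz, \emph{Markov Processes: Characterization and Convergence}). The plan has three steps: build the limit flow, verify $Q^{P,N}f \to L^P f$ for smooth $f$, and invoke the abstract convergence theorem.

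First, I would check that $L^P$ generates a well-defined deterministic Feller flow on $K := [1/N_0, 1-1/N_0]^P$. The vector field $b_k(x) := F'(F^{-1}(x_k))(-F^{-1}(x_k) + \sum_j w_{kj} x_j)$ is $\mathcal{C}^1$ on $K$: the inverse $F^{-1}$ is smooth on $[1/N_0, 1-1/N_0]$ because $F' > 0$ is continuous and hence bounded away from zero on the compact preimage (combine assumption (iv) with the sigmoid shape from (i)), and $F'$, $F''$ are bounded by assumption (iii). Lipschitz continuity of $b$ gives existence and uniqueness of the ODE $\dot X^P = b(X^P)$. The definition of $N_0$ fixes the boundary signs: at $x_k = 1-1/N_0$ one has $F^{-1}(x_k) \geq \sum_j w_{kj} x_j$ (precisely the condition that kills the outward jump rate for the chain), so $b_k(x)\leq 0$; symmetrically $b_k(x)\geq 0$ at the lower face. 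Hence the flow leaves $K$ invariant and defines a deterministic Feller process whose generator is $L^P$ with $\mathcal{C}^2(K)$ as a core.

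Second, for $f \in \mathcal{C}^2(K)$, a Taylor expansion yields
\[
f\bigl(x \pm \tfrac{1}{N}e_k\bigr) - f(x) = \pm \tfrac{1}{N}\partial_k f(x) + \tfrac{1}{2N^2}\partial_k^2 f(\xi_k^\pm)
\]
for some $\xi_k^\pm \in K$. Plugging into the formula for $Q^{P,N}f(x)$ and using the identity $a_+ - a_- = a$ for $a \in \R$, the first-order terms assemble exactly into $L^P f(x)$, while the remainder is bounded in absolute value by $C(f)/N$ uniformly on $K$; here $C(f)$ depends only on $\|f\|_{\mathcal{C}^2}$, $P$, $\|F'\|_\infty$, and $\sup_K |F^{-1}|$, all of which are finite.

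Third, uniform convergence $Q^{P,N} f \to L^P f$ on a core implies convergence of the associated semigroups by Theorem 1.6.1 of Ethier--Kurtz, which upgrades to weak convergence $X^{P,N} \xrightarrow{d} X^P$ on $D([0,\infty), K)$ via their Theorem 4.2.11, provided the sequence is tight. Tightness is automatic since $K$ is compact and the total jump rates on $K$ are uniformly $\mathcal{O}(N)$ with jump sizes $\mathcal{O}(1/N)$, so Aldous' criterion is trivially satisfied. The hypothesis $X^{P,N}(0) \xrightarrow{d} X^P(0)$ is fed in at this final step. The only point of real substance --- and the place where the choice of $N_0$ is essential --- is the invariance of the compact box $K$ under both the prelimit chain and the limit ODE; once that geometric input is in hand, everything else is mechanical bookkeeping with the generators.
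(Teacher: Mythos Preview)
Your proposal is correct and follows essentially the same route as the paper: both arguments reduce the claim to uniform convergence of generators on a core and then invoke a standard Feller convergence theorem (the paper cites Kallenberg, Thm.~19.25, you cite Ethier--Kurtz). The paper's version is terser --- it simply sets $f_N(x) = f([x_1N]/N,\dots,[x_PN]/N)$ and asserts $Q^{P,N}f_N \to L^Pf$ uniformly --- whereas you spell out the Taylor expansion, the well-posedness and invariance of the limiting ODE on $K$, and a (superfluous but harmless) tightness check; these are exactly the points the paper leaves implicit.
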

\begin{proof}
By a standard theorem on the convergence of Feller processes (cf. \cite{kallenberg}, Thm. 19.25) it is enough to prove that for ${f \in \mathcal{C}^{\infty}([1/N_0,1-1/N_0]^P)}$ there exist bounded measurable $f_N$ such that ${\|f_N - f\|_{\infty} \xrightarrow{N\rightarrow\infty} 0}$ and ${\|Q^{P,N}f_N - L^Pf\|_{\infty} \xrightarrow{N \rightarrow\infty} 0}$.

Let thus ${f \in \mathcal{C}^{\infty}([1/N_0,1-1/N_0]^P)}$ and set ${f_N(x) = f\big(\tfrac{[x_1N]}{N}, ...,\tfrac{[x_PN]}{N}\big)}$. Then
it is easy to see that \[Q^{P,N}f_N(x) \xrightarrow{N\rightarrow\infty} L^Pf(x)\]
uniformly in $x$.
 \end{proof}

\section{Diffusion Approximation}
\label{section:diffusionapproximation}

\noindent 
We are now going to approximate $X^{P,N}$ by a diffusion process. 
To this end, we follow the standard approach due to Kurtz and derive a central limit theorem for the fluctuations of $X^{P,N}$. This will give us a candidate for a stochastic correction term to (\ref{eq:actvoltagebased}).

\subsection{A Central Limit Theorem}

We write
\[X_k^{P,N}(t) = X_k^{P,N}(0) + \int_0^t Q^{P,N} \pi_k (X^{P,N}(s)) ds + M_k^{P,N}(t)\]
where $\pi_k: (0,1)^P \rightarrow (0,1), x \mapsto x_k$, is the projection onto the $k$-th coordinate, and
\[ M_k^{P,N}(t) :=  X_k^{P,N}(t) - X_k^{P,N}(0) - \int_0^t Q^{P,N} \pi_k (X^{P,N}(s)) ds\]
is a martingale describing the fluctuations of the process. 

We start by determining the limit of these fluctuations.

\begin{proposition}
\label{prop:CLT}
\[ \left(\sqrt{N} M^{P,N}_k\right) \xrightarrow[N\rightarrow\infty]{d}  \bigg( \int_0^t  \Big(F'(F^{-1}(X_k^P(s))) \Big|-F^{-1}(X_k^P(s)) + \sum_{j=1}^P w_{kj}X^P_j(s) \Big| \Big)^{\frac{1}{2}} dB_k(s)\bigg)\]
on $\mathcal{D}([0,\infty), \R^P)$, where $B$ is a $P$-dimensional standard Brownian motion, and $X^P$ is the Feller process from Proposition \ref{prop:LLN} .
\end{proposition}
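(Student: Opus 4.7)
The approach is the one advertised in the text: apply a functional martingale central limit theorem to $\sqrt{N}M^{P,N}$, in the spirit of Kurtz's treatment of density-dependent jump Markov processes. Three ingredients are needed: the predictable quadratic variations of the diagonal martingales, the predictable covariations between distinct coordinates, and a uniform bound on the jump sizes.

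The key computation is that of the predictable quadratic variation. For a pure-jump process with generator $Q^{P,N}$, the compensator of $(M_k^{P,N})^2$ is $\int_0^t (Q^{P,N}\pi_k^2 - 2\pi_k Q^{P,N}\pi_k)(X^{P,N}(s))\,ds$. Using $(x_k \pm 1/N)^2 - x_k^2 = \pm 2x_k/N + 1/N^2$, the terms linear in $x_k$ cancel against $2\pi_k Q^{P,N}\pi_k$ and only the $1/N^2$ contributions survive, yielding
\[\langle M_k^{P,N}\rangle_t = \frac{1}{N}\int_0^t F'(F^{-1}(X_k^{P,N}(s)))\Big|-F^{-1}(X_k^{P,N}(s)) + \sum_{j=1}^P w_{kj}X_j^{P,N}(s)\Big|\,ds.\]
An analogous calculation with $f=\pi_k\pi_l$ for $k\neq l$ exploits the fact that each elementary jump moves exactly one coordinate: $(x_k+\delta_{ik}/N)(x_l+\delta_{il}/N) - x_kx_l = \delta_{ik}x_l/N + \delta_{il}x_k/N$ for $k\neq l$, so $Q^{P,N}(\pi_k\pi_l) = \pi_l Q^{P,N}\pi_k + \pi_k Q^{P,N}\pi_l$ and hence $\langle M_k^{P,N}, M_l^{P,N}\rangle\equiv 0$.

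Multiplying by $\sqrt{N}$ removes the $1/N$ prefactor on the diagonal and keeps the off-diagonals at zero. By Proposition \ref{prop:LLN} the process $X^{P,N}$ converges in distribution to the deterministic process $X^P$, which forces convergence in probability; since $F$, $F^{-1}$, and $F'$ are continuous on the compact state space $[1/N_0, 1-1/N_0]^P$, the continuous mapping theorem yields, for each $t\geq 0$,
\[\langle \sqrt{N}M_k^{P,N}\rangle_t \xrightarrow[N\to\infty]{\mathbb{P}} \int_0^t F'(F^{-1}(X_k^P(s)))\Big|-F^{-1}(X_k^P(s)) + \sum_{j=1}^P w_{kj}X_j^P(s)\Big|\,ds =: C_k(t),\]
while the cross covariations remain identically zero and the jumps of $\sqrt{N}M^{P,N}$ are deterministically bounded by $1/\sqrt{N}\to 0$. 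These are precisely the hypotheses of the functional martingale CLT, whose conclusion is convergence in $\mathcal{D}([0,\infty),\R^P)$ to a continuous centered Gaussian martingale with diagonal covariance $C_k(t)$, admitting the representation stated in the proposition driven by a $P$-dimensional standard Brownian motion $B$. The one delicate point is the vanishing of the cross covariations, since it is what guarantees that the components of $B$ are independent; it relies specifically on the single-coordinate jump structure of $X^{P,N}$. The remaining steps are routine given compactness of the state space and the regularity of $F$.
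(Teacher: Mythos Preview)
Your proof is correct and follows essentially the same route as the paper: compute the predictable quadratic variations via the carr\'e du champ, observe that the cross covariations vanish because only one coordinate jumps at a time, bound the jump sizes by $1/\sqrt{N}$, and apply the functional martingale CLT (the paper cites Theorem~1.4, Chapter~7 of Ethier--Kurtz). If anything, you are slightly more explicit than the paper in justifying the convergence of the quadratic variation via Proposition~\ref{prop:LLN} and the continuous mapping theorem, whereas the paper simply asserts convergence in probability.
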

\begin{proof}
The bracket process of $M^{P,N}_k$ is given in terms of the carr\'e du champ operator as
\begin{align*}
\big\langle M^{P,N}_k \big\rangle_t
&= \int_0^t Q^{P,N} \pi_k^2(X^{P,N}(s)) - 2 Q^{P,N} \pi_k (X^{P,N}(s)) X_k^{P,N}(s) ds \\
& = \int_0^t \sum_{y\in E^{P,N}} q^{P,N}(X^{P,N}(s), y) (y_k - X_k^{P,N}(s))^2 ds \\
& = \frac{1}{N}  \int_0^t F'(F^{-1}(X^{P,N}_k(s))) \bigg( \Big(-F^{-1}(X^{P,N}_k(s)) + \sum_{j=1}^P w_{kj} X^{P,N}_j(s) \Big)_+ \\
& \qquad {} + \Big(-F^{-1}(X^{P,N}_k(s)) + \sum_{j=1}^P w_{kj} X^{P,N}_j(s)\Big)_- \bigg) ds\\
& = \frac{1}{N}  \int_0^t F'(F^{-1}(X^{P,N}_k(s))) \Big|-F^{-1}(X^{P,N}_k(s)) + \sum_{j=1}^P w_{kj} X^{P,N}_j(s)\Big| ds.
\end{align*}
Thus, \[ \big\langle\sqrt{N}M_k^{P,N}\big\rangle_t \xrightarrow{N\rightarrow\infty} \int_0^t F'(F^{-1}(X^{P}_k(s))) \Big|-F^{-1}(X^{P}_k(s)) + \sum_{j=1}^P w_{kj} X^{P}_j(s)\Big| ds \]
in probability.
For $k \neq l$,
\[ \big\langle M^{P,N}_k, M^{P,N}_l \big\rangle_t = \int_0^t \sum_y q^{P,N}(X^{P,N}(s), y) (y_k - X_k^{P,N}(s)) (y_l - X_l^{P,N}(s)) ds =0,\]
since for $y$ with $q^{P,N}(X^{P,N}(s),y) > 0$ at least one of $y_k - X_k^{P,N}(s)$ and $ y_l - X_l^{P,N}(s)$ is always $0$.
Now 
\[ E \left(\sup_{t} \sqrt{N} \|M^{P,N}(t)-M^{P,N}(t-)\|_2\right)  \leq \frac{1}{\sqrt{N}} \xrightarrow{N\rightarrow\infty}0,\]
and the statement follows by the martingale central limit theorem, see for example Theorem 1.4, Chapter 7 in \cite{ethierkurtz} .
\qquad \end{proof}

\noindent This suggests to approximate $X^{P,N}$ by the system of coupled diffusion processes
\begin{align*}
 da_k^{P,N}(t) 
&= F'(F^{-1}(a^{P,N}_k(t))) \Big( - F^{-1}(a^{P,N}_k(t)) + \sum_{j=1}^P w_{kj} a^{P,N}_j(t) \Big) dt\\
& \qquad {} + \frac{1}{\sqrt{N}} \Big(F'(F^{-1}(a^{P,N}_k(t))) \Big|-F^{-1}(a^{P,N}_k(t)) + \sum_{j=1}^P w_{kj} a^{P,N}_j(t)\Big|\Big)^{\frac{1}{2}} dB_k(t),
\end{align*}
$1 \leq k \leq P$.

Using It\^o's formula, 
we formally obtain an approximation for $u^{P,N}_k := F^{-1}(a_k^{P,N})$,
\begin{equation}
\label{eq:diffapprox}
\begin{split}
du^{P,N}_k(t) 
& = \Big( - u^{P,N}_k(t) + \sum_{j=1}^P w_{kj} F(u^{P,N}_j(t)) \\
& \qquad {} - \frac{1}{2N} \frac{F''(u^{P,N}_k(t))}{F'(u^{P,N}_k(t))^2} \Big|-u^{P,N}_k(t) + \sum_{j=1}^P w_{kj} F(u^{P,N}_j(t))\Big| \Big) dt \\
& \qquad {} + \frac{1}{\sqrt{N}}\Bigg(\frac{ \Big|-u^{P,N}_k(t) + \sum_{j=1}^P w_{kj} F(u^{P,N}_j(t))\Big|}{F'(u^{P,N}_k(t))}\Bigg)^{\frac{1}{2}}dB_k(t).
\end{split}
\end{equation}

\noindent Since the square root function is not Lipschitz continuous near 0, we cannot apply standard existence theorems to obtain a solution to (\ref{eq:diffapprox}) with the full multiplicative noise term.
Instead we will linearize around a deterministic solution to the neural field equation and approximate to a certain order of $\frac{1}{\sqrt{N}}$.

\subsection{Fluctuations around the Traveling Wave}
\label{section:fluctuationsaroundtw}

Let $\bar{u}$ be a solution to the neural field equation (\ref{eq:nfe}). To determine the finite-size effects on $\bar{u}$, 
we consider a spatially extended network, that is, we look at populations 
distributed over an interval $[-L,L] \subset \R$ and use the stochastic integral 
derived in Proposition \ref{prop:CLT} to describe the local fluctuations on this 
interval.

Let $m \in \mathbb{N}$ be the density of populations on $[-L,L]$ and consider $P=2mL$ populations located at $\frac{k}{m}, k \in \{-mL, -mL+1, ..., mL-1\}$. 
We choose the weights $w_{kl}$ as a discretization of the integral kernel $w:\R \rightarrow [0,\infty)$,
\begin{equation}
\label{eq:weights1}
w^m_{kl} = \int_{\frac{l}{m}}^{\frac{l+1}{m}} w(\tfrac{k}{m}- y) dy, \ \ -mL \leq k,l \leq mL-1.
\end{equation}
Since we think of the network as describing only a section of the actual domain $\R$, we add to each population an input $F(\bar{u}_t(-L))$ and $F(\bar{u}_t(L))$, respectively, at the boundaries with corresponding weights

\begin{equation}
\begin{aligned}
\label{eq:weights2}
	w^{m,+}_k & = \int_{L}^{\infty} w(\tfrac{k}{m} - y) dy,  \\
	w^{m,-}_k & = \int_{- \infty}^{-L} w(\tfrac{k}{m} - y) dy.
\end{aligned}
\end{equation}

\noindent Fix a population size $N \in \N$. Set $\bar{u}_k(t) = \bar{u}(\tfrac{k}{m}, t)$ and for $u\in\R^P$,
\[\hat{b}^m_k(t,u) = - u_k(t) + \sum_{l=-mL}^{mL-1} w^m_{kl} F(u_l(t)) + w_k^{m,+} F(\bar{u}(L,t)) + w_k^{m,-} F(\bar{u}(-L,t)).\]
We write 
\begin{equation}
\label{eq:decompu}
u_k = \bar{u}_k+v_k
\end{equation}
and assume that $v_k$ is of order $1/\sqrt{N}$. Linearizing (\ref{eq:diffapprox}) around $(\bar{u}_k)$ we obtain the approximation
\[du_k(t)  = \hat{b}_k^m(t,u) dt + \frac{1}{\sqrt{N F'(\bar{u}_k(t))}} |\hat{b}^m_k(t,\bar{u})|^{\frac{1}{2}} dB_k(t)\]
to order $1/\sqrt{N}$.

Note that $\hat{b}^m_k(t,\bar{u}) \approx \partial_t \bar{u}_k(t) = 0$ for a stationary solution $\bar{u}$, with equality if $\bar{u}$ is constant. The finite-size effects are hence of smaller order. 
Since the square root function is not differentiable at $0$ we cannot expand further.

However, the situation is different if we linearize around a moving pattern. We consider the traveling wave solution $u^{TW}_t(x) = \hat{u}(x-ct)$ to (\ref{eq:nfe}) and we assume without loss of generality that $c>0$. Then $\hat{b}^m_k(t,\bar{u}) \approx \partial_t \bar{u}_k(t) = - c \partial_x \tw_t <0$. This monotonicity property allows us to approximate to order $1/N$ in (\ref{eq:diffapprox}). Indeed, note that since $\hat{u}$ and $F$ are increasing,
\begin{equation}
\label{eq:positive}
\begin{split}
& - \hat{b}^m_k(t, \tw_t) \\
& = \tw_t(\tfrac{k}{m}) - \sum_l w^m_{kl} F(\tw_t(\tfrac{l}{m})) - w_k^{m,+} F(u^{TW}_t(L)) - w_k^{m,-} F(u^{TW}_t(-L))\\
&  \geq \tw_t(\tfrac{k}{m})  - \int_{-L}^{\infty} w(\tfrac{k}{m}-y) F(\tw_t)(y) dy - \int_{-\infty}^{-L} w(\tfrac{k}{m}-y) (F(u^{TW}_t(-L)) dy \\
&  =  c\hat{u}_x(\tfrac{k}{m}-ct)  - \int_{-\infty}^{-L} w(\tfrac{k}{m}-y) ( F(\tw_t(-L))-F(u^{TW}_t(y))) dy \\
& \geq  c\hat{u}_x(\tfrac{k}{m}-ct)  - \left( F(\tw_t(-L)) - F(a_1)\right) \\
& \xrightarrow{L\rightarrow \infty} c\hat{u}_x(\tfrac{k}{m}-ct) > 0.
\end{split}
\end{equation}
So for $L$ large enough, $-\hat{b}^m_k(t, \tw_t)>0$ 
and we have, using Taylor's formula and (\ref{eq:decompu}), 
\begin{align*}
\bigg(\frac{|\hat{b}^m_k(t,u)|}{F'(u_k(t)))}\bigg)^{\frac{1}{2}} 
& = \bigg(\frac{-\hat{b}^m_k(t, \tw_t)}{F'(\tw_t(\tfrac{k}{m}))}\bigg)^{\frac{1}{2}} 
 + \frac{1}{2\sqrt{-\hat{b}^m_k(t, \tw_t) F'(\tw_t(\tfrac{k}{m}))}} \\ 
& \qquad 
	\bigg( \frac{F''(\tw_t(\tfrac{k}{m}))}{F'(\tw_t(\tfrac{k}{m}))} \hat{b}^m_k(t, \tw_t) v_k(t) \\
	& \qquad {} + v_k(t) - \sum_l w_{kl} F'(\tw_t(\tfrac{l}{m})) v_l(t) \bigg) + O\Big(\frac{1}{N}\Big).
\end{align*}
As a possible diffusion approximation in the case of traveling wave solutions we therefore obtain the system of stochastic differential equations
\begin{equation}
\label{eq:systemofsde}
\begin{split}
du_k(t) & = \Big(\hat{b}^m_k(t, u) + \frac{1}{2N} \frac{F''(\tw_t(\tfrac{k}{m}))}{F'(\tw_t(\tfrac{k}{m}))^2} \hat{b}^m_k(t, \tw_t)\Big) dt\\
&  \qquad {} + \frac{1}{\sqrt{N}}\bigg[ \bigg( \frac{-\hat{b}^m_k(t, \tw_t)}{F'(\tw_t(\tfrac{k}{m}))}\bigg)^{\frac{1}{2}} + \frac{1}{2\sqrt{-\hat{b}^m_k(t, \tw_t) F'(\tw_t(\tfrac{k}{m}))}} \\
	& \qquad \bigg( \frac{F''(\tw_t(\tfrac{k}{m}))}{F'(\tw_t(\tfrac{k}{m}))} \hat{b}^m_k(t, \tw_t) v_k(t) \\
	& \qquad {} + v_k(t) - \sum_l w_{kl} F'(\tw_t(\tfrac{l}{m})) v_l(t) \bigg) \bigg] dB_k(t),
\end{split}
\end{equation}
for which there exists a unique solution as we will see in the next section.

\section{The Continuum Limit}
\label{section:continuumlimit}

\noindent In this section we take the continuum limit of the network of diffusions (\ref{eq:systemofsde}), that is, we let the size of the domain and the density of populations go to infinity in order to obtain a stochastic neural field equation with a noise term describing the fluctuations around the deterministic traveling wave solution due to finite size effects.

We thus have to deal with functions that `look almost like the wave' and choose to work in the space 
$\mathcal{S}:= \left\{ u:\R \rightarrow \R: \ u - \twp \in L^2 \right\}$.
Note that since for $u_1, u_2 \in\mathcal{S}$, 
$\|u_1-u_2\| < \infty$, the $L^2$-norm induces a topology on $\mathcal{S}$.

\subsection{A Word on Correlations}
\label{subsection:correlations}

Recall the definition of the Markov chain introduced in section \ref{section:MC}.
Note that as long as we allow only single jumps in the evolution, meaning that there will not be any jumps in the activity in two populations at the same time, the martingales associated with any two populations will be uncorrelated, yielding independent driving Brownian motions in the diffusion limit (cf. Proposition \ref{prop:CLT}). 

This only makes sense for populations that are clearly distinguishable. In order to determine the fluctuations around traveling wave solutions, we consider spatially extended networks of populations. The population located at $x \in \R$ is to be understood as the ensemble of all neurons in the $\epsilon$-neighborhood $(x-\epsilon, x+\epsilon)$ of $x$ for some $\epsilon > 0$.
If we consider two populations located at $x,y \in \R$ with $|x-y| < 2\epsilon$, then they will overlap. Consequently, simultaneous jumps will occur, leading to correlations between the driving Brownian motions.

Thus the Markov chain model (and the associated diffusion approximation) is only 
appropriate as long as the distance between the individual populations is large 
enough. When taking the continuum limit, we therefore adapt the model by 
introducing correlations between the driving Brownian motions of populations 
lying close together.

\subsection{The Stochastic Neural Field Equation}
We start by defining the limiting object.
For $u \in \mathcal{S}$ and $t \in [0,T]$ set 
\[ b(t,u)(x) = - u(x) + \int_{-\infty}^{\infty} w(x-y) F(u(y)) dy  = -u(x) + w \ast F(u)(x) . \]

\noindent Let $\mathcal{W}^Q$ be a (cylindrical) $Q$-Wiener process on $L^2$ with covariance operator $\sqrt{Q}$ given as
$\sqrt{Q}h(x) = \int_{-\infty}^{\infty} q(x,y) h(y) dy$ for some symmetric kernel $q(x,y)$ with $q(x,\cdot) \in L^2 \cap L^1$ for all $x \in \R$ and $\sup_{x \in \R } (\|q(x,\cdot)\| + \|q(x,\cdot)\|_1) < \infty$. (Details on the theory of $Q$-Wiener processes can be found in \cite{prevotroeckner, daprato}.) We assume that the dispersion coefficient is given as the multiplication operator associated with $\sigma: [0,T] \times \mathcal{S} \rightarrow L^2(\R)$, which we also denote by $\sigma$, where $\sigma$ is Lipschitz continuous with respect to the second variable uniformly in $t\leq T$, that is,  we assume that there exists $L_{\sigma}>0$ such that for all $u_1, u_2 \in \mathcal{S}$ and $t \in [0,T]$,
\begin{equation}
\label{eq:Lipschitzsigma}
\|\sigma(t,u_1)-\sigma(t,u_2)\| \leq  L_{\sigma} \|u_1-u_2\|.
\end{equation}

\noindent The correlations are described by the kernel $q$. For $f,g$ in $L^2(\R)$,
\[E(\langle f, \mathcal{W}^Q_t \rangle \langle g, \mathcal{W}^Q_t \rangle) = \int f(x) Qg(x) dx = \int f(x) \int q(x,z) \int q(z,y) g(y) dy dz dx,\]
so formally,
\["E(\mathcal{W}^Q_t(x) \mathcal{W}^Q_t(y)) =  E(\langle \delta_x, \mathcal{W}^Q_t \rangle \langle \delta_y, \mathcal{W}^Q_t \rangle) = q \ast q(x,y)", \]
where we denote by $q\ast q(x,y) $ the integral $\int q(x,z) q(z,y) dz$.
We could for example take
\begin{equation}
\label{eq:q} q(x,y) = q(x-y) = \frac{1}{2\epsilon}\mathds{1}_{(-\epsilon, \epsilon)} (x-y)
\end{equation}
for some small $\epsilon > 0$ (cf. section \ref{subsection:correlations}).

We have $\sigma(t,v) \in L_2^0$ since by Parseval's identity
	\begin{align*}
	\|\sigma(t,v)\|_{L_2^0}^2 
	& = \sum_k \|\bar{\sigma}(t,v) Q^{\frac{1}{2}} e_k\|_2^2 = \int \sigma(t,v)^2(x) \|q(x,\cdot)\|^2 dx \\
	& \leq \sup_x \|q(x,\cdot)\|^2 \|\sigma(t,v)\|^2 < \infty.
	\end{align*}
Note that for uncorrelated noise (i.e. Q=E), this is not the case. Therefore, in \cite{riedler} Riedler and Buckwar derive the central limit theorem in the Sobolev space $H^{-\alpha}$. Splitting up the limiting procedures, $N \rightarrow \infty$ and continuum limit, allows us to incorporate correlations and finally to work in the more natural function space $L^2$.
\begin{proposition}
\label{prop:existencesnfe}
For any initial condition $u^0 \in \mathcal{S}$, the stochastic evolution equation
\begin{equation}
\label{eq:fsesnfe}
\begin{split}
du_t( x) & = \Big( -u_t + w \ast F(u_t) +  \frac{1}{2N} \frac{F''(\tw_t)}{F'(\tw_t)^2} \partial_t \tw_t \Big) \ dt  + \sigma(t, u_t) d\mathcal{W}^Q_t(x) \\
u_0 & = u^0,
\end{split}
\end{equation}
has a unique strong $\mathcal{S}$-valued solution.
$u$ has a continuous modification.
For any $p \geq 2$, \[ E \Big(\sup_{t\in [0,T]} \|u_t-\tw_t\|^p\Big) < \infty. \]
\end{proposition}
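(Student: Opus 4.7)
The natural approach is to reformulate \eqref{eq:fsesnfe} as a genuine $L^2(\R)$-valued stochastic evolution equation with globally Lipschitz coefficients and then apply standard Hilbert space SPDE theory (e.g. Chapter 7 of Da Prato--Zabczyk or the variational framework in Prévôt--Röckner). The key observation is that $\mathcal{S}$ is the affine space $\tw_t + L^2(\R)$, so writing $v_t := u_t - \tw_t$ turns the problem into one for a process with values in the Hilbert space $L^2(\R)$. Using that $\tw$ solves \eqref{eq:nfe}, that is $\partial_t \tw_t = -\tw_t + w\ast F(\tw_t)$, a direct computation yields
\begin{equation*}
dv_t = \Bigl[-v_t + w\ast\bigl(F(\tw_t+v_t)-F(\tw_t)\bigr) + \tfrac{1}{2N}\tfrac{F''(\tw_t)}{F'(\tw_t)^2}\,\partial_t\tw_t\Bigr]\,dt + \sigma(t,\tw_t+v_t)\,d\mathcal{W}^Q_t,
\end{equation*}
with initial value $v_0 = u^0 - \hat{u} \in L^2(\R)$.

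Next I would verify that the drift and diffusion coefficients of this reformulated equation are globally Lipschitz from $L^2(\R)$ to $L^2(\R)$ (resp.\ to $L_2^0$), uniformly on $[0,T]$. The term $-v$ is trivially Lipschitz. For the convolution term, Young's inequality together with $\|w\|_1 = 1$ and the bound $\|F'\|_\infty < \infty$ give $\|w\ast(F(\tw_t+v_1)-F(\tw_t+v_2))\| \leq \|F'\|_\infty\|v_1-v_2\|$. The deterministic inhomogeneous term is independent of $v$, and since $\tw_t$ takes values in the compact interval $[a_1,a_2]\subset(0,1)$ on which $F'$ is continuous and strictly positive and $F''$ is bounded, the factor $F''(\tw_t)/F'(\tw_t)^2$ is uniformly bounded; combined with $\|\partial_t\tw_t\| = c\|\hat{u}_x\| < \infty$ (established in the assumptions section), this term defines a continuous curve in $L^2(\R)$ on $[0,T]$. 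Finally, $v \mapsto \sigma(t,\tw_t+v)$ inherits the Lipschitz constant $L_\sigma$ from assumption \eqref{eq:Lipschitzsigma}, and by the computation preceding the proposition $\sigma(t,\tw_t+v)\,Q^{1/2} \in L_2^0$ with norm controlled by $\sup_x\|q(x,\cdot)\|\cdot\|\sigma(t,\tw_t+v)\|$.

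With global Lipschitz coefficients and linear growth in hand, the standard Banach fixed-point (Picard iteration) argument in $L^2(\Omega;\mathcal{C}([0,T];L^2(\R)))$ yields a unique continuous mild (and strong, since $A = -\mathrm{Id}$ generates the semigroup $e^{-t}$ on $L^2$) solution $v$, and thus $u_t := \tw_t + v_t$ is the unique $\mathcal{S}$-valued solution of \eqref{eq:fsesnfe} with continuous paths. The $L^p$-moment bound is obtained in the usual way: apply It\^o's formula to $\|v_t\|^p$, estimate the drift contribution using the Lipschitz / linear-growth bound, and control the martingale part via the Burkholder--Davis--Gundy inequality, which together with Gronwall's lemma yields $E\sup_{t\leq T}\|v_t\|^p < \infty$ for every $p\geq 2$. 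The only subtlety worth checking carefully is that the deterministic forcing term $\frac{1}{2N}\frac{F''(\tw_t)}{F'(\tw_t)^2}\partial_t\tw_t$ really lies in $L^2(\R)$ uniformly in $t\leq T$, but this follows cleanly from the compactness of the range of $\hat{u}$ together with the positivity and smoothness of $F$ on that range, so no real obstacle arises.
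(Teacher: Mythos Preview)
Your plan is correct and is exactly the standard route one expects here: shift by the deterministic wave to land in $L^2(\R)$, verify global Lipschitz/linear growth of drift and diffusion, and invoke the usual Picard/fixed-point plus It\^o--BDG--Gronwall package. The paper itself does not give an argument but simply refers to Prop.~6.5.1 in \cite{langthesis}, so there is nothing further to compare; your outline is the natural content of that cited proof.
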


\noindent For a proof see for example Prop.~6.5.1 in \cite{langthesis}.

\subsection{Embedding of the Diffusion Processes}

As a next step we embed the systems of coupled diffusion processes (\ref{eq:systemofsde}) into $L^2(\R)$.
Let $m \in \mathbb{N}$ be the population density and $L^m \in \mathbb{N}$ the length of the domain with $L^m \uparrow \infty$ as $m \rightarrow \infty$.
For $k \in \{-mL^m, -mL^m+1, ...,  mL^m-1\}$ set $I^m_k = [\frac{k}{m}, \frac{k+1}{m})$ and $J^m_k = (\frac{k}{m}-\frac{1}{4m}, \frac{k}{m}+\frac{1}{4m})$, and let

\[W^m_k(t) = 2m \langle \mathcal{W}^Q_t , \mathds{1}_{J^m_k} \rangle\]
be the average of $\mathcal{W}^Q_t$ on the interval $J^m_k$. 
Then the $W^m_k$ are one-dimensional Brownian motions with covariances
\begin{align*}
	E(W^m_k W^m_l)
	& = 4m^2 \langle \sqrt{Q} \mathds{1}_{J^m_k}, \sqrt{Q} \mathds{1}_{J^m_l} \rangle 
	= 4m^2 \int_{J^m_k} \int_{J^m_l} q \ast q(y,z) dy dz.
\end{align*}
Note that the Brownian motions are independent as long as 
$m < \frac{1}{4\epsilon}$.

For $m \in \mathbb{N}$ let $\hat{\sigma}^m:[0,T] \times \R^P \rightarrow \R^P$ and assume that there exists $L_{\hat{\sigma}^m} > 0$ such that for any $t \in [0,T]$ and $u_1, u_2 \in \R^P$,
\[\|\hat{\sigma}^m(t,u_1) - \hat{\sigma}^m(t,u_2)\|_2 \leq L_{\hat{\sigma}^m} \|u_1-u_2\|_2.\] 
Consider the system of coupled stochastic differential equations
\begin{align*}
	du^m_k(t) 
	& = \hat{b}^m_k(t, (u^m_k)) + \frac{1}{2N} \frac{F''(\tw_t(\tfrac{k}{m}))}{F'(\tw_t(\tfrac{k}{m}))^2} \hat{b}^m_k(t, (\tw_t(\tfrac{k}{m}))_k) \\
	& \qquad {} + w^{m,+}_k F(\tw_t(L^m)) + w^{m,-}_k F(\tw_t(-L^m)) \Big) dt \\
	& \qquad {} + \hat{\sigma}_k^m(t,u^m(t)) dW_k^m(t), \qquad \qquad -mL^m \leq k \leq mL^m-1,
\end{align*}
with weights as in (\ref{eq:weights1}) and (\ref{eq:weights2}).

We identify $u = (u_k)_{-mL^m \leq k \leq mL^m-1} \in \mathbb{R}^P$ with its piecewise constant interpolation as an element of $L^2$ via the embedding
\[\iota^m(u) = \sum_{k=-mL^m}^{mL^m-1} u_k \mathds{1}_{I^m_k}.\]
For $u \in \mathcal{C}(\mathbb{R})$ set
\begin{align*}
	\pi^{m}(u) & = \sum_{k=-mL^m}^{mL^m-1} u(\tfrac{k}{m}) \mathds{1}_{I^m_k}. \\
\end{align*}
Then $u^m_t := \iota^m((u^m_k(t))_k)$ satisfies
\begin{equation}
\label{eq:systemofdiff}
\begin{split}
du^m_t(x) & =  b^m(t, u^m_t)(x) dt + \frac{1}{2N} \pi^m\Big(\frac{F''(\tw_t)}{F'(\tw_t)^2}\Big) b^m(t, \pi^m(\tw_t)) dt \\
& \qquad  + \sigma^m(t,u^m_t) \circ \Phi^m d\mathcal{W}^Q_t(x),
\end{split}
\end{equation}
where $b^m:[0,T]\times L^2(\R) \rightarrow L^2(\R)$ and $\Phi^m:L^2(\R) \rightarrow L^2(\R)$ are given as
\begin{align*} 
	b^m(t,u) & =  - u_t + \sum_k \bigg[\int_{-L^m}^{L^m} w(\tfrac{k}{m}-y) F(u_t)(y) dy \\
	& \qquad + w^{m,+}_k F(\tw_t(L^m)) + w^{m,-}_k F(\tw_t(-L^m))
	\bigg] \mathds{1}_{I^m_k}, \\
	\Phi^m (u) & = 2m \sum_{k=-mL^m}^{mL^m-1} \langle u, \mathds{1}_{J^m_k} \rangle  \mathds{1}_{I^m_k},
\end{align*}
and where $\sigma^m:[0,T] \times L^2(\R) \rightarrow L^2(\R)$ is such that for $u\in\R^P$, $\sigma^m(t,\iota^m(u))= \hat{\sigma}^m_k(t,u)$ on $I^m_k$.
We assume joint continuity and Lipschitz continuity in the second variable uniformly in $m$ and $t\leq T$, that is, there exists $L_{\sigma}>0$ such that
for $u_1,u_2 \in L^2(\R)$ and $t\leq T$,
\[\|\sigma^m(t,u_1)-\sigma^m(t,u_2)\| \leq L_{\sigma} \|u_1-u_2\|.\]
\begin{proposition}
For any initial condition $u^0 \in L^2(\R)$ there exists a unique strong $L^2$-valued solution $u^m$ to (\ref{eq:systemofdiff}).
$u^m$ admits a continuous modification. For any $p\geq 2$, 
\[E\Big(\sup_{t\leq T} \|u^m_t\|^p\Big) < \infty.\]
\end{proposition}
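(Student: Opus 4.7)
The plan is to check that (\ref{eq:systemofdiff}) fits into the classical variational/mild-solution framework for SPDEs in Hilbert space (see \cite{prevotroeckner, daprato}) by verifying that the drift and diffusion coefficients are globally Lipschitz continuous and of linear growth as maps into $L^2(\R)$ and $L_2^0$, respectively. Existence and uniqueness of a strong $L^2$-valued solution will then follow from a Picard iteration (or a direct appeal to the standard theorems); the continuous modification comes from the factorization method applied to the stochastic convolution, and the moment bound follows from It\^o's formula, the Burkholder-Davis-Gundy inequality, and Gronwall's lemma, exactly as in the proof of Proposition \ref{prop:existencesnfe}.

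For the drift $b^m(t,\cdot): L^2(\R) \to L^2(\R)$, I would note that $u \mapsto -u$ is the identity on $L^2$, while the nonlocal piece
\begin{equation*}
u \mapsto \sum_k \biggl[\int_{-L^m}^{L^m} w(\tfrac{k}{m}-y) F(u)(y)\,dy\biggr] \mathds{1}_{I^m_k}
\end{equation*}
is piecewise constant and compactly supported on $[-L^m, L^m]$, so that $F \in \mathcal{C}^1$ with $F'$ bounded, together with $w \in L^1$, yields a global Lipschitz estimate via Young's inequality. The boundary weight terms $w^{m,\pm}_k F(\tw_t(\pm L^m))$ and the correction $\frac{1}{2N}\pi^m(F''/F'^2)\, b^m(t, \pi^m(\tw_t))$ do not depend on $u$ and, since $\tw_t$ takes values in $[a_1,a_2]\subset(0,1)$ where $F'$ is bounded away from $0$, they are continuous $L^2$-valued functions of $t$, supported on $[-L^m, L^m]$.

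For the diffusion, $\Phi^m$ is a bounded operator on $L^2(\R)$ with $2mL^m$-dimensional range. Computing the Hilbert-Schmidt norm of $\sigma^m(t,u)\circ\Phi^m\circ\sqrt{Q}$ on an orthonormal basis and exploiting the disjointness of the $I^m_k$'s, one obtains
\begin{align*}
\|\sigma^m(t,u)\circ \Phi^m\|_{L_2^0}^2 & = \sum_k \Bigl(\int_{I^m_k} \sigma^m(t,u)(x)^2\,dx\Bigr) \cdot 4m^2 \|\sqrt{Q}\mathds{1}_{J^m_k}\|^2 \\
& \leq \sup_x\|q(x,\cdot)\|^2 \, \|\sigma^m(t,u)\|^2,
\end{align*}
the last step using $\|\sqrt{Q}\mathds{1}_{J^m_k}\|^2 = \int_{J^m_k}\!\int_{J^m_k} q\!\ast\! q(y,z)\,dy\,dz \leq |J^m_k|^2 \sup_x\|q(x,\cdot)\|^2 = \sup_x\|q(x,\cdot)\|^2/(4m^2)$. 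The uniform Lipschitz hypothesis on $\sigma^m(t,\cdot): L^2 \to L^2$ therefore transfers to $\sigma^m\circ\Phi^m$ as a map $L^2 \to L_2^0$, and linear growth follows the same way.

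The main obstacle, though essentially routine, is the above Hilbert-Schmidt estimate: this is precisely where the spatial correlations built into $\mathcal{W}^Q$ (cf.\ Section \ref{subsection:correlations}) become essential, since without them $\sigma^m\circ\Phi^m$ would fail to be Hilbert-Schmidt uniformly in $m$ and the continuum argument of the section could not proceed in $L^2(\R)$.
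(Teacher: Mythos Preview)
Your approach is essentially the same as the paper's: verify global Lipschitz continuity of the drift and diffusion coefficients and then invoke the standard existence theory. Your Hilbert--Schmidt computation for $\sigma^m\circ\Phi^m$ is exactly what the paper does (via Parseval's identity), yielding the bound $\sup_x\|q(x,\cdot)\|^2\,L_\sigma^2\,\|u_1-u_2\|^2$. One small imprecision: for the nonlocal part of the drift you appeal to ``Young's inequality'', but the map $u\mapsto \sum_k\bigl[\int w(\tfrac{k}{m}-y)F(u)(y)\,dy\bigr]\mathds{1}_{I^m_k}$ is not a genuine convolution (the kernel is evaluated at the discrete points $\tfrac{k}{m}$), so Young does not apply directly. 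The paper handles this with Cauchy--Schwarz together with the Riemann-sum estimate $\sum_k \tfrac{1}{m} w(\tfrac{k}{m}-y)\le 1+\tfrac{1}{m}\|w_x\|_1$ (equation~(\ref{eq:riemannsum})), which is the honest substitute and gives the same Lipschitz bound you want.
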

\begin{proof}
Again we check that the drift and diffusion coefficients are Lipschitz continuous.
Note that 
\begin{equation}
\label{eq:riemannsum}
\begin{split}
\sum_k \frac{1}{m} w(\tfrac{k}{m}, y)  & =  \int_{-L^m}^{L^m} w(x-y) dx + \sum_k \int_{\frac{k}{m}}^{\frac{k+1}{m}} w(\tfrac{k}{m}-y)-w(x-y) dx \\
& \leq \|w\|_1 + \sum_k \int_{\frac{k}{m}}^{\frac{k+1}{m}} \int_{\frac{k}{m}}^{\frac{k+1}{m}} |w_x(z-y)| dz dx \\
& = 1 + \sum_k \frac{1}{m} \int_{\frac{k}{m}}^{\frac{k+1}{m}} |w_x(z-y)| dz \leq 1 + \frac{1}{m}\|w_x\|_1.
\end{split}
\end{equation}
Therefore, for $u_1, u_2 \in L^2(\R)$,
\begin{align*}
	 \|b^m(t, u_1) - b^m(t, u_2)\|_2^2 &
	 \leq 2 \|u_1-u_2\|^2 + 2 \|F'\|_{\infty}^2 \int_{-L^m}^{L^m} \sum_k \frac{1}{m}w(\tfrac{k}{m}-y) \left( u_1(y) -u_2(y) \right)^2 dy \\
	& \leq 2 \|u_1-u_2\|^2 + 2 \|F'\|_{\infty}^2 \Big( 1 + \frac{1}{m}\|w_x\|_1\Big) \|u_1-u_2\|^2
\end{align*}
and for an orthonormal basis $(e_k)$ of $L^2(\R)$ we obtain, using Parseval's identity,
\begin{align*}
{} & \|\left(\sigma^m(t,u_1) - \sigma^m(t, u_2)\right)\circ \Phi^m\|_{L^0_2}^2 \\
& = \sum_k \Big\|\left( \sigma^m(t, u_1)-\sigma^m(t,u_2)\right) \sum_l 2m \langle e_k, \sqrt{Q} \mathds{1}_{J^m_l} \rangle \mathds{1}_{I^m_l} \Big\|^2 \\
& = \int \left( \sigma^m(t, u_1)-\sigma^m(t,u_2)\right)^2(x) \sum_l  4m^2 \int \Big( \int_{J^m_l}q(z,y) dy \Big)^2 dz \ \mathds{1}_{I^m_l}(x) dx \\
& \leq \int \left( \sigma^m(t, u_1)-\sigma^m(t,u_2)\right)^2(x) \sum_l 2m \int \int_{J^m_l} q(z,y)^2 dy  dz \ \mathds{1}_{I^m_l}(x) dx\\
& = \int \left( \sigma^m(t, u_1)-\sigma^m(t,u_2)\right)^2(x) \sum_l 2m \int_{J^m_l} \|q(y,\cdot)\|^2 dy \ \mathds{1}_{I^m_l}(x) dx\\
& \leq \sup_x \|q(x,\cdot)\|^2 L_{\sigma}^2 \|u_1-u_2\|^2  \qedhere
\end{align*}
\end{proof}

\subsection{Convergence}
We are now able to state the main convergence result.
We will need the following assumption on the kernel $w$.
\begin{assumption}
There exists $C_w>0$ such that for $x\geq 0$,
\begin{equation}
\label{eq:assonw} 
\int_x^{\infty} w(y) dy \leq C_w w(x).
\end{equation}
\end{assumption}
\sloppy{That assumption is satisfied for classical choices of $w$ such as ${w(x) = \frac{1}{2\sigma}e^{-\frac{|x|}{\sigma}}}$ or ${w(x) = \frac{1}{\sqrt{2\pi\sigma^2}} e^{-\frac{x^2}{2\sigma^2}}}$.

\begin{theorem}
\label{thm:mainthm}
Fix $T>0$. Let $u$ and $u^m$ be the solutions to (\ref{eq:fsesnfe}) and (\ref{eq:systemofdiff}), respectively.
Assume that 
\begin{enumerate}[(i)]
	\item $\sup_k \sup_{x \in I^m_k} \|2m \sqrt{Q}\mathds{1}_{J^m_k}-q(x,\cdot)\| \xrightarrow{m \rightarrow \infty} 0$,
	\item for any $u: [0,T] \rightarrow \mathcal{S}$ with $sup_{t\leq T} \|u_t - \twp\| < \infty$, 
	\[\sup_{t\leq T} \|\sigma^m(t,u_t\mathds{1}_{(-L^m, L^m)})-\sigma(t,u_t)\| \xrightarrow{m \rightarrow \infty} 0.\]
\end{enumerate}
\sloppy{Then for any initial conditions ${u_0^m\in L^2(\R)}$, ${u_0 \in \mathcal{S}}$ such that ${\|u_0^m-u_0\|_{L^2((-L^m, L^m))} \xrightarrow{m\rightarrow \infty} 0}$, and for all $p\geq2$,}
\[  \mathds{E} \Big( \sup_{t \in [0,T]} \|u^m_t-u_t\|_{L^2((-L^m, L^m))}^p \Big) \xrightarrow{m \rightarrow \infty} 0.\]
\end{theorem}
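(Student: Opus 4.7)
The plan is a Gronwall argument applied to the $L^p$-norm of the error process $e^m_t := (u^m_t - u_t)\mathds{1}_{(-L^m, L^m)}$. Subtracting the integral forms of (\ref{eq:systemofdiff}) and (\ref{eq:fsesnfe}) and splitting each coefficient difference as
\begin{equation*}
b^m(s, u^m_s) - b(s, u_s) = \bigl[b^m(s, u^m_s) - b^m(s, u_s)\bigr] + \bigl[b^m(s, u_s) - b(s, u_s)\bigr],
\end{equation*}
the first summand is Lipschitz (the estimate in the existence proof for (\ref{eq:systemofdiff}) applies) and feeds a term of the form $C\int_0^t \|e^m_s\|^2\,ds$ into the inequality. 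The second summand is a deterministic discretization/truncation error, which must be shown to vanish as $m \to \infty$ uniformly in $s \leq T$. The same split applied to the diffusion coefficient produces a Lipschitz piece and a discretization piece handled analogously. The $O(1/N)$ It\^o correction terms in both equations are controlled by the uniform bound on $\hat{b}^m(s, \tw_s)$ coming from (\ref{eq:positive}) together with the boundedness of $F', F''$.

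The drift discretization error $\|b^m(s, u_s) - b(s, u_s)\|_{L^2((-L^m, L^m))}$ is estimated in three pieces: (a) the Riemann-sum error from replacing $w(x-y)$ by the piecewise-constant $\sum_k w(\tfrac{k}{m}-y)\mathds{1}_{I_k^m}(x)$, of order $1/m$ by (\ref{eq:riemannsum}); (b) the truncation error between $\int_{|y|>L^m} w(x-y)F(u_s(y))\,dy$ and the boundary inputs $w_k^{m,\pm} F(\tw_s(\pm L^m))$; (c) the $L^2$-tail contribution of $u_s$ outside $(-L^m, L^m)$. For (b) I decompose $F(u_s(y)) = F(\tw_s(y)) + [F(u_s(y)) - F(\tw_s(y))]$. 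The first part converges to the constant boundary value $F(\tw_s(\pm L^m))$ with a monotonic defect that is integrable against $w$ thanks to Assumption (\ref{eq:assonw}); the second part is bounded by $\|F'\|_\infty \|u_s - \tw_s\|_{L^2(\{|y|>L^m\})}\cdot \bigl\|x \mapsto \int_{|y|>L^m} w(x-y)\,dy\bigr\|_{L^2((-L^m, L^m))}$, and both factors vanish, the first by $u_s - \tw_s \in L^2$ and the second again by Assumption (\ref{eq:assonw}). The diffusion discretization error is treated analogously using hypotheses (i) and (ii) of the theorem: (i) controls the difference between the embedded noise $\sigma(s,u_s)\circ\Phi^m$ and $\sigma(s, u_s)$ in the $L^0_2$-sense, while (ii) replaces $\sigma^m$ by $\sigma$ in the limit.

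Applying It\^o's formula to $\|e^m_t\|^p$, taking suprema, and using the Burkholder--Davis--Gundy inequality on the resulting stochastic integral then yields
\begin{equation*}
E\Bigl[\sup_{s \leq t} \|e^m_s\|^p\Bigr] \leq C \|u_0^m - u_0\|^p_{L^2((-L^m, L^m))} + \delta_m + C\int_0^t E\Bigl[\sup_{r \leq s} \|e^m_r\|^p\Bigr] ds,
\end{equation*}
where $\delta_m \to 0$ collects the discretization and truncation errors above, and Gronwall's lemma closes the argument. The main obstacle is piece (b): the boundary correction $w_k^{m,\pm} F(\tw_s(\pm L^m))$ is only a proxy for the genuine tail integral, and $u_s$ need not be pointwise close to $\tw_s$ on the tail. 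Assumption (\ref{eq:assonw}) is exactly what is needed here, since it forces $\int_{L^m}^\infty w(y)\,dy$ to decay at the geometric rate of $w$ itself, while the uniform $L^p$-bound on $u_s - \tw_s$ from Proposition~\ref{prop:existencesnfe} ensures that the $L^2$-tail of $u_s - \tw_s$ vanishes uniformly in $s \leq T$.
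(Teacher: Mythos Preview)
Your proposal is correct and matches the paper's approach: It\^o's formula on the squared error, a Lipschitz/discretization splitting of drift and diffusion, BDG, and Gronwall --- the paper merely centers around the wave first (working with $v_t = u_t - \tw_t$ and $v^m_t = u^m_t - \pi^m(\tw_t)$ and showing $E\sup_t\|v^m_t - v_t\|^p\to 0$) rather than with your restricted difference $e^m_t$, which is a cosmetic choice. One claim to correct: the uniform $L^p$-bound from Proposition~\ref{prop:existencesnfe} does \emph{not} by itself force the $L^2$-tail of $u_s - \tw_s$ to vanish uniformly in $s\leq T$; the paper gets this from path continuity of $s\mapsto v_s$ together with Dini's theorem on the compact interval $[0,T]$, and you should invoke the same device (the analogous remark applies to the tails of $w\ast|v_s|$, $\sigma(s,u_s)$, and $\partial_x\tw_s$).
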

We postpone the proof to section \ref{section:proofs}.

\begin{remark}
Let $\epsilon > 0$. The kernel $q(x,y) = \frac{1}{2\epsilon}\mathds{1}_{(x-\epsilon, x+\epsilon)}(y)$ satisfies assumption (i) of the theorem. Indeed, 
note that for $x,z$ with $|x-z| \leq \frac{1}{m}$, \\ $|\{y: \mathds{1}_{(x-\epsilon,x+\epsilon)}(y) \neq \mathds{1}_{(z-\epsilon, z+\epsilon)}(y)\}| \leq |z-x| \leq \frac{1}{m}$. Therefore we obtain that
for all $k$ 
and for any $x \in I^m_k$,
\begin{align*}
	& \|2m \sqrt{Q}\mathds{1}_{J^m_k}-q(x,\cdot)\|_2^2  = 4m^2 \int_{-\infty}^{\infty} \Big( \int_{J^m_k} q(z,y) - q(x,y) dz \Big)^2 dy \\
	& \leq 2m \int_{-\infty}^{\infty} \int_{J^m_k} \left( q(z,y) - q(x,y)  \right)^2 dz \ dy \\
	& \leq 2 \int_{J^m_k}  \frac{1}{4\epsilon^2}  dz 
	\leq \frac{1}{4 \epsilon^2 m} \xrightarrow{m\rightarrow \infty} 0.
\end{align*}
\end{remark}

The theorem applies to the case of the fluctuations described in section \ref{section:fluctuationsaroundtw}. In order to ensure that the diffusion coefficients are in $L^2(\R)$, we cut off the noise outside a compact set $\{ \partial_x \tw_t \geq \delta\}, \delta>0$. Note that the neglected region moves with the wave such that we always retain the fluctuations in the relevant regime away from the fixed points.

\begin{theorem}
\label{thm:diffcoeff}
Assume that the wave speed is strictly positive, $c>0$. Fix $\delta>0$.
The diffusion coefficients as derived in Section \ref{section:fluctuationsaroundtw},
\begin{align*}
	\sigma(t,u) &= \frac{1}{\sqrt{N}}\Big( \alpha(t) + \beta(t) (u - \tw_t) \\
	& \qquad {} - \gamma(t) w\ast (F'(\tw_t) (u-\tw_t)) \Big) \mathds{1}_{\{\partial_x \tw_t \geq \delta\}}, \\
	\sigma^m(t,u) &=  \frac{1}{\sqrt{N}} \Big( \alpha^m(t) + \beta^m(t) (u-\pi^m(\tw_t))\\
	& \qquad {} - \gamma^m(t) \pi^m(w \ast (\pi^m(F'(\tw_t)) (u-\pi^m(\tw_t))))\Big) \mathds{1}_{\{\partial_x \tw_t \geq \delta\}}, 
\end{align*}
where
\begin{align*}
	\alpha(t) &= \sqrt{\frac{ |- \tw_t + w\ast F(\tw_t)|}{F'(\tw_t)}} = \sqrt{\frac{ c \partial_x \tw_t}{F'(\tw_t)}}, \\
	\beta(t) &= \frac{1}{2 \sqrt{c \partial_x \tw_t F'(\tw_t)}} \left( -\frac{F''(\tw_t(x))}{F'(\tw_t(x))} c\partial_x \tw_t + 1 \right), \\
	\gamma(t) &= \frac{1}{2 \sqrt{c \partial_x \tw_tF'(\tw_t)}},\\
	\alpha^m(t) &= \sqrt{\frac{-b^m(t,\pi^m(\tw_t))}{\pi^m(F'(\tw_t))}} \mathds{1}_{[-L^m, L^m)}, \\
	\beta^m(t) &= \frac{1}{2 \sqrt{-b^m(t,\pi^m(\tw_t)) \pi^m(F'(\tw_t))}} \\
	& \qquad {} \times \left(  -\frac{\pi^m(F''(\tw_t))}{\pi^m(F'(\tw_t))} (-b^m(t,\pi^m(\tw_t)))  + 1 \right) \mathds{1}_{[-L^m, L^m)},\\
	\gamma^m(t) &= \frac{1}{2 \sqrt{-b^m(t,\pi^m(\tw_t)) \pi^m(F'(\tw_t))}} \mathds{1}_{[-L^m, L^m)},
\end{align*}
are jointly continuous and Lipschitz continuous in the second variable with Lipschitz constant uniform in $m$ and $t\leq T$, and satisfy condition (ii) of Theorem \ref{thm:mainthm}.
\end{theorem}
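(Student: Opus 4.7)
The plan is to verify the three claims — joint continuity, uniform-in-$m$ Lipschitz continuity in $u$, and the approximation property (ii) — by exploiting the cutoff $\mathds{1}_{\{\partial_x \tw_t \geq \delta\}}$ to obtain uniform positive lower bounds on all denominators, and then reducing everything to properties of $\tw_t$, $F$ and the kernel $w$ that we already have.

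First I would set up the boundedness bookkeeping. Since $\hat u$ is monotone with limits $a_1, a_2$ and $\hat u_x \in L^2 \cap L^1$ (the latter follows from monotonicity and boundedness), the set $K_t := \{x : \partial_x \tw_t(x) \geq \delta\}$ is compact with Lebesgue measure independent of $t$. On $K_t$ one has $c\partial_x \tw_t \geq c\delta > 0$ and $F'(\tw_t) \geq \inf_{y\in[a_1,a_2]} F'(y) =: c_F > 0$ (using $F' > 0$ and $\tw_t \in [a_1,a_2]$). Combined with the boundedness of $F',F'',F'''$, the coefficients $\alpha(t), \beta(t), \gamma(t)$ are uniformly bounded on $K_t$ with bounds independent of $t$. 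For the discretised versions, the computation (\ref{eq:positive}) combined with assumption (\ref{eq:assonw}) on $w$ shows that $-b^m(t,\pi^m(\tw_t))(x) \to c\partial_x \tw_t$ uniformly on $K_t$ as $m, L^m \to \infty$; in particular there exists $m_0$ such that for $m \geq m_0$ we have $-b^m(t,\pi^m(\tw_t)) \geq c\delta/2$ on $K_t$, giving the same uniform positive lower bound as in the continuous case.

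Next I would establish the Lipschitz bound. Writing $\sigma(t,u_1) - \sigma(t,u_2) = \frac{1}{\sqrt N}\bigl(\beta(t)(u_1-u_2) - \gamma(t)\, w\ast(F'(\tw_t)(u_1-u_2))\bigr)\mathds{1}_{K_t}$, the bound $\|\sigma(t,u_1)-\sigma(t,u_2)\| \leq \frac{1}{\sqrt N}(\|\beta\|_\infty + \|\gamma\|_\infty \|w\|_1 \|F'\|_\infty)\|u_1-u_2\|$ follows from Young's inequality. For $\sigma^m$ the argument is identical in structure, but the convolution is replaced by $\pi^m(w\ast(\pi^m(F'(\tw_t)) \cdot))$; here the Riemann-sum estimate (\ref{eq:riemannsum}) already proved in the excerpt yields
\begin{equation*}
\|\pi^m(w\ast(\pi^m(F'(\tw_t)) v))\|^2 \leq \|F'\|_\infty^2 \bigl(1 + \tfrac{1}{m}\|w_x\|_1\bigr)\|v\|^2,
\end{equation*}
which is uniform in $m$. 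Joint continuity in $(t,u)$ then follows from the Lipschitz estimate in $u$ together with continuity in $t$ of $\tw_t$, $\partial_x \tw_t$ and of $F,F',F''$ along the wave.

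The main work, and the place where I expect the real obstacle, is verifying (ii). I would show separately that $\alpha^m \to \alpha$, $\beta^m \to \beta$, $\gamma^m \to \gamma$ in $L^\infty$, and then combine. The key input is uniform convergence $b^m(t,\pi^m(\tw_t)) + c\partial_x \tw_t \to 0$ on $K_t$; this requires bounding (a) the Riemann-sum error $\sum_l w^m_{kl} F(\tw_t(l/m)) - \int w(k/m-y)F(\tw_t(y))\,dy$ using $w \in \mathcal{C}^1$, and (b) the tail contributions $w^{m,\pm}_k F(\tw_t(\pm L^m))$, which decay using assumption (\ref{eq:assonw}). Once this and the uniform convergence $\pi^m(F^{(j)}(\tw_t)) \to F^{(j)}(\tw_t)$ are in hand, one deduces $L^\infty$ convergence of $1/\sqrt{-b^m(t,\pi^m(\tw_t))\pi^m(F'(\tw_t))}$ to the continuous analogue thanks to the uniform lower bound from step one. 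For the convolution piece of $\sigma^m(t,u_t\mathds{1}_{(-L^m,L^m)}) - \sigma(t,u_t)$, I would split into $\pi^m(w\ast(\pi^m(F'(\tw_t)) v)) - w\ast(F'(\tw_t)v)$ with $v = u_t - \tw_t$, and treat the $\pi^m$-error and the discrete-vs-continuous convolution error separately, using that $u_t - \tw_t \in L^2$ is fixed and that $\pi^m v \to v$ in $L^2$. Collecting these estimates and using dominated convergence on $K_t$ (a set of finite measure) yields the required supremum-in-$t$ convergence.
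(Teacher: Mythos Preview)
The paper does not actually prove Theorem \ref{thm:diffcoeff}; it refers the reader to Thm.~6.5.5 in \cite{langthesis}. So there is no in-paper argument to compare against directly, only the estimates in Section \ref{section:proofs} that you rightly reuse.

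Your outline is sound and is the natural route: the cutoff $\mathds{1}_{\{\partial_x\tw_t \geq \delta\}}$ gives uniform positive lower bounds on $c\partial_x\tw_t$ and on $F'(\tw_t)$, and the estimate (\ref{eq:positive}) together with its mirror at $+L^m$ and the Riemann-sum error from $w\in\mathcal{C}^1$ yields $-b^m(t,\pi^m(\tw_t))\to c\partial_x\tw_t$ uniformly on $K_t$, which is indeed the key input. The Lipschitz estimate via Young's inequality and (\ref{eq:riemannsum}) is exactly right.

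Two points to tighten. First, you write ``$\pi^m v \to v$ in $L^2$'' for $v=u_t-\tw_t$, but $\pi^m$ requires point evaluation and is only defined on continuous functions. What you actually need---and what the structure of $\sigma^m$ gives you---is convergence of $\pi^m\bigl(w\ast(\pi^m(F'(\tw_t))\,v)\bigr)$ to $w\ast(F'(\tw_t)v)$; here $\pi^m$ is applied only to $F'(\tw_t)$ and to the continuous function $w\ast(\cdot)$, so the argument as in (\ref{eq:convpim}) together with $\|(\pi^m(F'(\tw_t))-F'(\tw_t))v\|\to 0$ does the job. Second, dominated convergence on $K_t$ gives convergence for fixed $t$, not the $\sup_{t\leq T}$ required in condition (ii). You should instead verify that each error term is bounded by a quantity of the form $\varepsilon_m\cdot(1+\sup_{t\leq T}\|u_t-\tw_t\|)$ with $\varepsilon_m\to 0$ independent of $t$; this follows because the $t$-dependence of $\tw_t$, $K_t$ and the coefficients enters only through the translation $x\mapsto x-ct$, while the boundary errors depend on $L^m$ alone. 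This is bookkeeping, not a genuine obstacle.
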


\noindent For a proof see Thm.~6.5.5 in \cite{langthesis}.


\section{Proof of Theorem \ref{thm:mainthm}}
\label{section:proofs}

Set $v_t = u_t - \tw_t$ and $v_t^m = u^m_t - \pi^m(\tw_t)$.
Note that
\begin{equation}
\label{eq:convpim}
\begin{split}
	\int_{-L^m}^{L^m} & (\pi^m(\tw_t)(x)-\tw_t(x))^2 dx  
	 \leq \sum_k \int_{\frac{k}{m}}^{\frac{k+1}{m}} \Big( \int_{\frac{k}{m}}^{\frac{k+1}{m}} \partial_x \tw_t(z) dz\Big)^2 dx \\
	&	\leq \frac{1}{m^2} \sum_k \int_{\frac{k}{m}}^{\frac{k+1}{m}} \left( \partial_x \tw_t(z)\right)^2 dz \leq \frac{1}{m^2} \|\twp_x\|^2.
\end{split}
\end{equation}
For the proof of the theorem it therefore suffices to show that 
\[E \Big(\sup_{t\leq T} \|v_t-v^m_t\|_2^p \Big) \xrightarrow{m\rightarrow\infty} 0,\]
since this will imply that         
\begin{align*}
& E \Big( \sup_{t\leq T} \|u_t - u^m_t\|_{L^2((-L^m, L^m))}^p \Big) \\
& \leq const \times \bigg[E \Big(\sup_{t\leq T} \|\tw_t-\pi^m(\tw_t)\|_{L^2((-L^m, L^m))}^p \Big)  + E \Big( \sup_{t\leq T} \|v_t-v^m_t\|_2^p \Big) \bigg]\xrightarrow{m \rightarrow \infty} 0.
\end{align*}


By It\^o's formula,
\begin{align*}
	& \frac{1}{2} d\|v^m_t-v_t\|_2^2 \\
	& = \langle b^m(t, v^m_t+\pi^m(\tw_t)) - b(t, v_t+\tw_t) -  \pi^m (\partial_t\tw_t) + \partial_t \tw_t, v^m_t-v_t \rangle dt \\
	& \qquad {} + \frac{1}{2N} \Big\langle \pi^m\Big(\frac{F''(\tw_t)}{F'(\tw_t)^2}\Big) b^m(t, \pi^m(\tw_t)) - \frac{F''(\tw_t)}{F'(\tw_t)^2} \partial_t \tw_t, v^m_t-v_t \Big\rangle dt \\
	& \qquad {} + \frac{1}{2} \|\sigma^m(t,v^m_t+ \pi^m(\tw_t)) \circ \Phi^m - \sigma(t,v_t+\tw_t)\|_{L^0_2}^2\, dt   \\
	& \qquad {} + \langle v^m_t-v_t, \left( \sigma^m(t, v^m_t+\pi^m(\tw_t)) \circ \Phi^m - \sigma(t, v_t+\tw_t)\right) d\mathcal{W}^Q_t \rangle.
\end{align*}
In order to finally apply Gronwall's Lemma, we estimate the terms one by one.


\subsection{The Drift}
We start by regrouping the terms in a suitable way.
We have
\begin{align*}
{} & \|v^m_t - v_t + b^m(t, v^m_t+\pi^m(\tw_t)) - b(t, v_t+\tw_t) - \pi^m(\partial_t\tw_t) + \partial_t \tw_t\|^2 \\
   & = \int_{-\infty}^{\infty} \bigg[\sum_k \bigg( \int_{-L^m}^{L^m} w(\tfrac{k}{m}-y) F(v^m_t(y) + \pi^m(\tw_t)(y)) dy \\
	 & \qquad {} + \int_{L^m}^{\infty} w(\tfrac{k}{m}- y) F(\tw_t(L^m)) dy + \int_{-\infty}^{-L^m} w(\tfrac{k}{m}-y) F(\tw_t(-L^m)) dy \bigg) \mathds{1}_{I^m_k}(x) \\
	 & \qquad {} - \int_{-\infty}^{\infty} w(x-y) F(v_t(y)+\tw_t(y)) dy - \pi^m( w \ast  F(\tw_t) ) + w \ast F(\tw_t)(x) \bigg]^2 dx \\
   & \leq 6 \int_{-\infty}^{\infty} \bigg[ \sum_k  \int_{-L^m}^{L^m} w(\tfrac{k}{m}- y) \Big( F(v^m_t(y) + \pi^m(\tw_t)(y))- F(v_t(y)+\tw_t(y)) \Big) dy \mathds{1}_{I^m_k}(x) \bigg]^2 \\
	 & \qquad {} + \bigg[ \sum_k \int_{-L^m}^{L^m} \left( w(\tfrac{k}{m}- y) - w(x-y) \right) \left( F(v_t(y)+\tw_t(y))-F(\tw_t(y)) \right) dy \mathds{1}_{I^m_k}(x)\bigg]^2 \\
	 & \qquad {} + \left[ \sum_k \int_{L^m}^{\infty} w(\tfrac{k}{m}- y) \left( F(\tw_t(L^m)) - F(\tw_t(y)) \right) dy \mathds{1}_{I^m_k}(x) \right]^2 \\
	 & \qquad {} + \left[ \sum_k \int_{-\infty}^{-L^m} w(\tfrac{k}{m}- y) \left( F(\tw_t(-L^m)) - F(\tw_t(y)) \right) dy \mathds{1}_{I^m_k}(x) \right]^2 \\
	 & \qquad {} + \left[ \left( \int_{L^m}^{\infty} + \int_{-\infty}^{-L^m} \right) w(x-y) \left( F(v_t(y) + \tw_t(y)) - F(\tw_t(y)) \right) dy \right]^2\\
	 & \qquad {} + \bigg[ \int_{-L^m}^{L^m} w(x-y) \left( F(v_t(y) + \tw_t(y)) - F(\tw_t(y)) \right) dy  \mathds{1}_{(-\infty, -L^m) \cup \left[L^m, \infty\right)}(x) \bigg]^2 dx\\
   & =: 6 (S_1 + S_2 + S_3 + S_4 + S_5 + S_6).
\end{align*}
Using the Cauchy-Schwarz inequality we get
\begin{align*}
S_1	
	& \leq \|F'\|_{\infty}^2 \int_{-L^m}^{L^m} \sum_k \frac{1}{m}  w(\tfrac{k}{m}- y) \left( v^m_t(y) + \pi^m(\tw_t)(y)- v_t(y)-\tw_t(y) \right)^2 dy\\
	& \stackrel{(\ref{eq:riemannsum})}{\leq} 2 \Big(1 + \frac{1}{m}\|w_x\|_1\Big) \|F'\|_{\infty}^2 \Big( \|v^m_t-v_t\|_2^2 + \int_{-L^m}^{L^m} \big(\pi^m(\tw_t)(y)-\tw_t(y)\big)^2 dy \Big).
\end{align*}
With (\ref{eq:convpim}) it follows that
\[S_1 \leq 2 \Big(1 + \frac{1}{m}\|w_x\|_1\Big) \|F'\|_{\infty}^2 \Big( \|v^m_t-v_t\|_2^2 +  \frac{1}{m^2} \|\twp_x\|_2^2 \Big).\]
Another application of the Cauchy-Schwarz inequality yields
\begin{align*}
	S_2 
	& = \sum_k \int_{\frac{k}{m}}^{\frac{k+1}{m}} \bigg( \int_{-L^m}^{L^m} \left( w(\tfrac{k}{m}- y) - w(x-y) \right) \\
	& \qquad \left( F(v_t(y)+\tw_t(y))-F(\tw_t(y)) \right) dy \bigg)^2 dx \\
	& \leq \sum_k \frac{1}{m} \bigg( \int_{-L^m}^{L^m} \int_{\frac{k}{m}}^{\frac{k+1}{m}} |w_x(z-y)| dz \left( F(v_t(y)+\tw_t(y))-F(\tw_t(y)) \right) dy \bigg)^2 \\
	& \leq \sum_k \frac{1}{m} \int_{-L^m}^{L^m} \int_{\frac{k}{m}}^{\frac{k+1}{m}} |w_x(z-y)| dz dy \\
	& \qquad {} \times \int_{-L^m}^{L^m} \int_{\frac{k}{m}}^{\frac{k+1}{m}} |w_x(z-y)| dz \left( F(v_t(y)+\tw_t(y))-F(\tw_t(y)) \right)^2 dy\\
	& \leq \frac{1}{m^2} \|w_x\|_1^2 \|F'\|_{\infty}^2 \|v_t\|_2^2	
\end{align*}
Using integration by parts, (\ref{eq:riemannsum}), and assumption (\ref{eq:assonw}), we obtain
\begin{align*}
	S_3
	& = \sum_k  \frac{1}{m} \bigg( \underbrace{\left[ - \int_y^{\infty} w(z-\tfrac{k}{m}) dz \left( F(\tw_t(L^m)) - F(\tw_t(y)) \right) \right]_{y=L^m}^{\infty}}_{=0}\\
	& \qquad {} - \int_{L^m}^{\infty} \int_y^{\infty} w(z-\tfrac{k}{m}) dz \  F'(\tw_t(y))\partial_x\tw_t(y) dy \bigg)^2 \\
	& \leq C_w^2 \sum_k \frac{1}{m} \left( \int_{L^m}^{\infty} w(y-\tfrac{k}{m}) F'(\tw_t(y)) \partial_x \tw_t(y) dy \right)^2 \\
	& \leq C_w^2 \Big(1 + \frac{1}{m}\|w_x\|_1\Big) \|F'\|_{\infty}^2 \int_{L^m}^{\infty} (\partial_x\tw_t(y))^2 dy.
\end{align*}
Analogously,
\[	S_4 	 \leq C_w^2 \Big(1 + \frac{1}{m}\|w_x\|_1\Big) \|F'\|_{\infty}^2 \int_{-\infty}^{-L^m} (\partial_x \tw_t(y))^2 dy. \]
Last we observe that
\begin{align*}
	S_5 
	& \leq \|F'\|_{\infty}^2 \bigg( \int_{L^m}^{\infty} + \int_{-\infty}^{-L^m} \bigg) v_t^2(y) dy.	
\end{align*}
and
\begin{align*}
	S_6 
	& \leq \|F'\|_{\infty}^2 \bigg( \int_{L^m}^{\infty} + \int_{-\infty}^{-L^m} \bigg)  \bigg( \int_{-L^m}^{L^m} w(x-y) |v_t(y)| dy \bigg)^2 dx \\
	& \leq \|F'\|_{\infty}^2 \bigg( \int_{L^m}^{\infty} + \int_{-\infty}^{-L^m} \bigg)  ( w \ast |v_t| (x) )^2 dx.
\end{align*}
Finally we consider
\begin{align*}
	S_7 
	&:= \Big\|\frac{F''(\tw_t)}{F'(\tw_t)^2} \partial_t \tw_t - \pi^m \Big(\frac{F''(\tw_t)}{F'(\tw_t)^2}\Big) b^m(t,\pi^m(\tw_t))\Big\|^2 \\
	& \leq 4 \bigg[ \bigg( \int_{-\infty}^{-L^m} + \int_{L^m}^{\infty} \bigg) \bigg( \frac{F''(\tw_t)(y)}{F'(\tw_t)^2(y)} \partial_t \tw_t(y) \bigg)^2 dy \\
	& \qquad + \Big\| \bigg( \frac{F''(\tw_t)}{F'(\tw_t)^2} - \pi^m \bigg(\frac{F''(\tw_t)}{F'(\tw_t)^2}\bigg)\bigg) \partial_t \tw_t \mathds{1}_{(-L^m, L^m)} \Big\|^2 \\
	& \qquad + \Big\| \pi^m \bigg(\frac{F''(\tw_t)}{F'(\tw_t)^2}\bigg) (\partial_t \tw_t - \pi^m(\partial_t \tw_t)) \mathds{1}_{(-L^m, L^m)}\Big\|^2\\
	& \qquad + \Big\| \pi^m \bigg(\frac{F''(\tw_t)}{F'(\tw_t)^2} \bigg) \bigg( \pi^m(w\ast F(\tw_t))- \pi^m(w\ast \pi^m(F(\tw_t))) \\
	& \qquad \qquad - \sum_k  \big( w^{m,+}_k  F(\tw_t(L^m)) + w^{m,-}_k F(\tw_t)(-L^m)\big) \mathds{1}_{I^m_k} \bigg) \Big\|^2 \bigg]\\
	& = 4 (S_{7,1} + S_{7,2} + S_{7,3} + S_{7,4}).
\end{align*}
We have
\begin{align*}
	S_{7,2}
	& \leq  \Big\| \frac{F^{(3)}(\twp) \twp_x}{(F'(\twp))^2} - \frac{2 (F''(\twp)^2\twp_x}{(F'(\twp))^3}\Big\|_{\infty}^2 \frac{1}{m^2} \| \partial_t \tw_t\|^2 
\end{align*}
and, as in (\ref{eq:convpim}),
\begin{align*}
S_{7,3} 
& \leq \Big\|\frac{F''(\tw_t)}{F'(\tw_t)^2}\Big\|_{\infty}^2 \|(\partial_t \tw_t - \pi^m(\partial_t \tw_t))\mathds{1}_{(-L^m, l^m)}\|^2 \\
& \leq \frac{1}{m^2} c^2 \Big\|\frac{F''(\tw_t)}{F'(\tw_t)^2}\Big\|_{\infty}^2 \|\twp_{xx}\|^2.
\end{align*}
The last summand satisfies, using (\ref{eq:riemannsum}) and (\ref{eq:convpim}),
\begin{align*}
	S_{7,4}
	& \leq 3 \Big\|\frac{F''(\tw_t)}{F'(\tw_t)^2}\Big\|_{\infty}^2  \\
	& \qquad \bigg[ \sum_k \frac{1}{m}\bigg( \int_{-L^m}^{L^m} w(\tfrac{k}{m}-y) ( F(\tw_t(y))-\pi^m(F(\tw_t))(y))^2 dy \bigg)^2 + S_3 + S_4\bigg] \\
	& \leq 3 \Big\|\frac{F''(\tw_t)}{F'(\tw_t)^2}\Big\|_{\infty}^2 (1+ \frac{1}{m}\|w_x\|_1) \|F'\|_{\infty}^2 \frac{1}{m^2} \|\twp_x\|^2 + S_3 + S_4).
\end{align*}
\subsection{The It\^o Correction}
\label{subsection:estimationitocorrection}

\begin{align*}
  & \|\sigma^m(t,v^m_t+ \pi^m(\tw_t)) \circ \Phi^m - \sigma(t,v_t+ \tw_t)\|_{L^0_2}^2\\
  & \leq 3 \Big( \|\left(\sigma^m(t,v^m_t+ \pi^m(\tw_t)) -  \sigma^m(t, (v_t + \tw_t) \mathds{1}_{(-L^m, L^m)})  \right) \circ \Phi^m\|_{L^0_2}^2 \\
	& \qquad {} + \|\left( \sigma^m(t, (v_t + \tw_t) \mathds{1}_{(-L^m, L^m)}) - \sigma(t, v_t + \tw_t) \right) \circ \Phi^m\|_{L^0_2}^2 \\
  & \qquad {} + \|\sigma(t, v_t + \tw_t)  \circ \Phi^m-\sigma(t, v_t+\tw_t)\|_{L^0_2}^2 \Big)\\
  & =: 3 (S_8 + S_9 + S_{10}).
\end{align*}
Let $(e_k)$ be an orthonormal basis of $L^2(\R)$.
Note that by Parseval's identity
\begin{align*}
	& \sum_k \left( \Phi^m(\sqrt{Q}e_k) \right)^2\\
	& = \sum_k \bigg( \sum_l 2m  \langle \sqrt{Q}e_k, \mathds{1}_{J^m_l} \rangle \mathds{1}_{I^m_l} \bigg)^2  = \sum_l \sum_k 4m^2 \langle \sqrt{Q}\mathds{1}_{J^m_l}, e_k \rangle^2\mathds{1}_{I^m_l} \\
	& = \sum_l 4 m^2  \|\sqrt{Q}\mathds{1}_{J^m_l}\|^2 \mathds{1}_{I^m_l} = \sum_l 4m^2 \int \bigg( \int_{J^m_l} q(x,y) dy \bigg)^2 dx  \ \mathds{1}_{I^m_l} \\
	& \leq \sum_l 2m \int \int_{J^m_l} q^2(x,y) dy dx  \ \mathds{1}_{I^m_l} \leq \sup_x \|q(x,\cdot)\|^2 \mathds{1}_{[-L^m, L^m)}.
\end{align*}
Thus, 
\begin{align*}
	S_8 
	& = \sum_k \int_{-\infty}^{\infty} \Big( \sigma^m(t,v^m_t+ \pi^m(\tw_t))\\
	& \qquad {} -  \sigma^m(t, (v_t + \tw_t) \mathds{1}_{(-L^m, L^m)})  \Big)^2(x) \left( \Phi^m(\sqrt{Q}e_k) \right)^2(x) dx\\
	& \leq \sup_x \|q(x,\cdot)\|^2 \ L_{\sigma}^2 \|v^m_t + \pi^m(\tw_t)-(v_t + \tw_t) \mathds{1}_{(-L^m, L^m)}\|^2\\
	& \stackrel{(\ref{eq:convpim})}{\leq} 2 \sup_x \|q(x,\cdot)\|^2 \ L_{\sigma}^2  \Big(\|v^m_t-v_t\|^2 + \frac{1}{m^2} \|\twp_x\|^2 \Big)
\end{align*}
and
\begin{align*}
S_9 & = \|\left( \sigma^m(t, (v_t + \tw_t) \mathds{1}_{(-L^m, L^m)}) - \sigma(t, v_t + \tw_t)  \right) \circ \Phi^m\|_{L^0_2}^2 \\
&  \leq \sup_x \|q(x,\cdot)\|^2 \|\sigma^m(t, (v_t + \tw_t) \mathds{1}_{(-L^m, L^m)}) - \sigma(t, v_t + \tw_t) \|^2.
\end{align*}
Using Parseval's identity again we get
\begin{align*}
	S_{10}
	& = \int_{-\infty}^{\infty} \sigma(t, v_t + \tw_t)^2(x) \sum_k  \bigg( \sum_l 2m \langle \sqrt{Q}e_k, \mathds{1}_{J^m_l} \rangle \mathds{1}_{I^m_l}(x) - \sqrt{Q}e_k(x) \bigg)^2 dx  \\
	& = \sum_l \int_{\frac{l}{m}}^{\frac{l+1}{m}} \sigma(t, v_t + \tw_t)^2(x) \sum_k  \bigg(2 m \int_{J^m_l} \int_{-\infty}^{\infty} (q(z,y)- q(x,y)) e_k(y) dy dz \bigg)^2 dx  \\
	& \qquad {} + \bigg( \int_{-\infty}^{-L^m} + \int_{L^m}^{\infty} \bigg) \sigma(t, v_t + \tw_t)^2(x) \sum_k (\sqrt{Q}e_k(x))^2 dx \\
	& \leq \sum_l \int_{\frac{l}{m}}^{\frac{l+1}{m}}\sigma(t, v_t + \tw_t)^2(x)  \bigg\|2m \int_{J^m_l} (q(z,\cdot)- q(x,\cdot))dz\bigg\|^2 dx  \\
	& \qquad {} +\bigg( \int_{-\infty}^{-L^m} + \int_{L^m}^{\infty} \bigg) \sigma(t, v_t + \tw_t)^2(x) \sum_k \left( \int_{-\infty}^{\infty} q(x,y) e_k(y) dy \right)^2 dx\\
	& \leq \|\sigma(t, v_t + \tw_t)\|^2 \sup_l \sup_{x \in I^m_l} \bigg\|2m \int_{J^m_l} q(z,\cdot) dz - q(x,\cdot)\bigg\|^2\\
		& \qquad {} + \sup_x \|q(x,\cdot)\|^2 \bigg( \int_{-\infty}^{-L^m} + \int_{L^m}^{\infty} \bigg) \sigma(t, v_t + \tw_t)^2(x) dx.
\end{align*}

\subsection{Application of Gronwall's Lemma}

We use $K, K_1, K_2, \tilde{K},$ etc. to denote suitable constants that may differ from step to step.
Summarizing the previous steps and using Young's inequality we arrive at
\begin{align*}
	& \frac{1}{2} d\|v^m_t-v_t\|^2 \\
	& \leq \bigg[- \|v^m_t-v_t\|^2 + \frac{1}{2} \|v^m_t-v_t\|^2 \\
	& \qquad {} + \frac{1}{2} \Big\|v^m_t - v_t + b^m(t, v^m_t+\pi^m(\tw_t)) - b(t, v_t+\tw_t)  - \pi^m(\partial_t\tw_t) + \partial_t \tw_t\Big\|_2^2  \\
	& \qquad {} + \frac{1}{4N}  \|v^m_t-v_t\|^2 + \frac{1}{4N} \Big\|\frac{F''(\tw_t)}{F'(\tw_t)^2} \partial_t \tw_t - \pi^m \Big(\frac{F''(\tw_t)}{F'(\tw_t)^2}\Big) b^m(t,\pi^m(\tw_t))\Big\|^2 \bigg] dt\\
	& \qquad {} + \frac{1}{2} \|\sigma^m(t,v^m_t+ \pi^m(\tw_t)) \circ \Phi^m - \sigma(t,v_t+ \tw_t)\|_{L^0_2}^2 dt \\ 
	& \qquad {} + \langle v^m_t-v_t, \left( \sigma^m(t, v^m_t+\pi^m(\tw_t)) \circ \Phi^m - \sigma(t, v_t+\tw_t)\right) d\mathcal{W}^Q_t \rangle \\
	& \leq K_1 \|v_t^m-v_t\|^2 dt + K_2 R(t,v_t,m) dt + dM_t,
\end{align*}
where
\begin{align*}
	R(t,v_t,m) &  = \frac{1}{m^2} \left( \|v_t\|^2 + \|\hat{u}_x\|^2 + \|\hat{u}_{xx}\|^2\right)  + \left( \int_{L^m}^{\infty} + \int_{-\infty}^{-L^m} \right) \Big( v_t^2(x) + (w\ast|v_t|)^2(x) \\
	& \qquad {} + \sigma(t,v_t+\tw_t)^2(x) + (\partial_x \tw_t(x))^2 \Big) dx \\
	& \qquad {} + \|\sigma(t, v_t+\tw_t)\|^2 \sup_k \sup_{x \in I^m_k} \|2m \sqrt{Q} \mathds{1}_{J^m_k} - q(x,\cdot)\|^2 \\
	& \qquad {}	+ \|\sigma^m(t, (v_t+\tw_t) \mathds{1}_{(-L^m, L^m)})-\sigma(t,v_t+\tw_t)\|^2
\end{align*}
and
\[M_t = \int_0^t \langle v^m_s-v_s, \left(\sigma^m(s, v^m_s+\pi^m(\tw_s)) \circ \Phi^m - \sigma(s, v_s+\tw_s)\right) d\mathcal{W}^Q_s \rangle \]
is a martingale
with quadratic variation process
\begin{equation}
\label{eq:quadraticvariation}
\begin{aligned}
 \left[M\right]_t &
 = \int_0^t \sum_k \langle v^m_s-v_s, \Big(\sigma^m(s, v^m_s+\pi^m(\tw_s)) \circ \Phi^m \\
& \qquad {} - \sigma(s, v_s+\tw_s)\Big) \circ \sqrt{Q} e_k \rangle^2 ds \\
& \leq \int_0^t \|v^m_s - v_s\|^2 \Big\|\Big( \sigma^m(s, v^m_s+\pi^m(\tw_s)) \circ \Phi^m \\
& \qquad {} - \sigma(s, v_s+\tw_s)\Big)\Big\|_{L_2^0}^2 ds.
\end{aligned}
\end{equation}
Applying It\^o's formula to the real-valued stochastic process $\|v^m_t-v_t\|^2$ we obtain for $p\geq 2$
\begin{align*}
	& d\|v^m_t-v_t\|^p \\*
	&=  \frac{p}{2}\|v^m_t-v_t\|^{p-2} d \|v^m_t-v_t\|^2 + \frac{p (p-2)}{8} \|v^m_t-v_t\|^{p-4} d\left[ \|v^m_t-v_t\|^2 \right]_t\\
	& \stackrel{(\ref{eq:quadraticvariation})}{\leq} K_1 p \|v^m_t-v_t\|^p dt + K_2 p \|v^m_t-v_t\|^{p-2} R(t,v_t,m) dt + p \|v^m_t-v_t\|^{p-2} dM_t \\
	& \qquad {} + \frac{p(p-2)}{2} \|v^m_t-v_t\|^{p-2}  \|\left( \sigma^m(t, v^m_t+\pi^m(\tw_t)) \circ \Phi^m - \sigma(t, v_t+\tw_t)\right) \|_{L_2^0}^2 dt.
\end{align*}
Estimating the last term as above and using Young's inequality we obtain
\begin{align*}
& d\|v^m_t-v_t\|^p   \\
&\leq \tilde{K_1} \|v^m_t-v_t\|^p dt  + \tilde{K_2} \|v^m_t-v_t\|^{p-2} R(t,v_t,m) dt  + p \|v^m_t-v_t\|^{p-2} dM_t\\
& \leq \left(\tilde{K_1}+\tilde{K_2} \tfrac{p-2}{p}\right) \|v^m_t-v_t\|^p dt  + \tilde{K_2}\tfrac{2}{p} R(t,v_t,m)^{\frac{p}{2}} dt+ p \|v^m_t-v_t\|^{p-2} dM_t.
\end{align*}
Integrating, maximizing over $t\leq T$, and taking expectations we get
\begin{equation}
\label{eq:estimatesup}
\begin{split}
	& E \left( \sup_{t\leq T} \|v^m_t-v_t\|^p\right) \\
  & \leq \|v^m_0-v_0\|^p + \left(\tilde{K_1}+\tilde{K_2} \frac{p-2}{p}\right) E \left( \sup_{t\leq T} \int_0^t \|v^m_s-v_s\|^p ds\right) \\
	& \qquad {} + \tilde{K_2}\frac{2}{p} E \left(\sup_{t\leq T} \int_0^t R(s,v_s,m)^{\frac{p}{2}} ds\right) + p E \left(\sup_{t\leq T} \int_0^t \|v^m_s-v_s\|^{p-2} dM_s\right)\\
	& \leq \|v^m_0-v_0\|^p + \left(\tilde{K_1}+\tilde{K_2} \frac{p-2}{p}\right) \int_0^T E \left( \sup_{s\leq t} \|v^m_s-v_s\|^p \right) dt \\
	& \qquad {} + \tilde{K_2}\frac{2}{p} E  \left( \int_0^T R(s,v_s,m)^{\frac{p}{2}} ds\right) + p E \left(\sup_{t\leq T} \int_0^t \|v^m_s-v_s\|^{p-2} dM_s\right).
\end{split}
\end{equation}
We estimate the last term using the Burkholder-Davis-Gundy inequality, (\ref{eq:quadraticvariation}), and Young's inequality:
\begin{align*}
& p E \left(\sup_{t\leq T} \int_0^t \|v^m_s-v_s\|^{p-2} dM_s \right) \\
& \leq K E \left( \int_0^T \|v_s^m-v_s\|_2^{2p-2} \|\left( \sigma^m(s, v^m_s+\pi^m(\tw_s)) \circ \Phi^m - \sigma(s, v_s+\tw_s)\right) \|_{L_2^0}^2 ds\right)^{\frac{1}{2}} \\
& \leq K E \bigg( \sup_{t\leq T} \|v_t^m-v_t\|^{p-1} \\
& \qquad \bigg( \int_0^T  \|\left( \sigma^m(s, v^m_s + \pi^m(\tw_s)) \circ \Phi^m - \sigma(s, v_s + \tw_s)\right) \|_{L_2^0}^2 ds \bigg)^{\frac{1}{2}} \bigg) \\
& \leq \frac{1}{2} E \left( \sup_{t\leq T} \|v_t^m-v_t\|^p \right)\\
& \qquad {} + \tilde{K}_3 E \left( \int_0^T  \|\left( \sigma^m(s, v^m_s +\pi^m(\tw_s)) \circ \Phi^m - \sigma(s, v_s+\tw_s)\right) \|_{L_2^0}^2 ds \right)^{\frac{p}{2}}.
\end{align*}
Bringing the first summand to the left-hand side of (\ref{eq:estimatesup}) this implies that
\begin{align*}
	& E \left( \sup_{t\leq T} \|v^m_t-v_t\|^p \right) \\
  & \leq 2 \|v^m_0-v_0\|^p + 2 \left(\tilde{K_1}+\tilde{K_2} \frac{p-2}{p}\right) \int_0^T \
	E \left( \sup_{s\leq t} \|v^m_s-v_s\|^p \right)dt \\
	& \qquad {} + \tilde{K_2}\frac{4}{p} E \left( \int_0^T R(s,v_s,m)^{\frac{p}{2}} ds \right)\\
	& \qquad {}+ 2 \tilde{K_3} E \left( \int_0^T  \|\left( \sigma^m(t, v^m_t+\pi^m(\tw_t)) \circ \Phi^m - \sigma(t, v_t+\tw_t)\right)\|_{L_2^0}^2 ds \right)^{\frac{p}{2}}.
\end{align*}
We estimate the last term as before and obtain
\begin{align*}
	& E \left( \int_0^T  \|\left(\sigma^m(s, v^m_s + \pi^m(\tw_s)) \circ \Phi^m - \sigma(s, v_s+\tw_s)\right) \|_{L_2^0}^2 ds \right)^{\frac{p}{2}} \\
	& \leq K E \left( \int_0^T \|v^m_s-v_s\|^2 + R(s, v_s,m) ds \right)^{\frac{p}{2}} \\
	& \leq \tilde{K}  E \left( \int_0^T \sup_{s\leq t} \|v^m_s-v_s\|^p dt + \int_0^T R(s, v_s,m)^{\frac{p}{2}} ds \right).
\end{align*}
Altogether we arrive at
\begin{align*}
	 E \left( \sup_{t\leq T} \|v^m_t-v_t\|^p\right) & \leq 2 \|v^m_0 -v_0\|_2^p + \hat{K_1} E \int_0^T R(t,v_t,m)^{\frac{p}{2}} dt \\*
	& \qquad {} + \hat{K_2} \int_0^T E \left( \sup_{s\leq t} \|v^m_s-v_s\|^p \right) dt.
\end{align*}
An application of Gronwall's Lemma yields
\begin{align*}
	E \left( \sup_{t\leq T} \|v^m_t-v_t\|^p\right) & \leq K \left( \|v^m_0-v_0\|^p  + E \left( \sup_{t\leq T} R(t,v_t,m)^{\frac{p}{2}} \right) \right).
\end{align*}
The sequence of continuous functions $f^m: [0,T] \rightarrow \R$, 
\begin{align*}
f^m(t) & = \bigg( \left( \int_{L^m}^{\infty} + \int_{-\infty}^{-L^m} \right) v_t^2(x) + (w\ast|v_t|)^2(x) + \sigma(t,v_t+\tw_t)^2(x) + (\partial_x \tw_t(x))^2  dx \bigg)^{\frac{p}{2}}
\end{align*}
is decreasing and converges pointwise to $0$ since all the integrands are in $L^2(\R)$. By Dini's Theorem the convergence is uniform. This together with the facts that\\ ${\|\sigma(t,v_t)\|_2^2 \leq K(1+\|v_t\|^2)}$ and ${E \left( \sup_{t\leq T}  \|v_t\|^2 \right) < \infty}$ by Proposition \ref{prop:existencesnfe}, assumptions (i) and (ii), and dominated convergence implies that
\begin{align*}
& E \left( \sup_{t\leq T} R(t,v_t,m)^{\frac{p}{2}} \right) \\
& \leq K \bigg(  \frac{1}{m^p} \left( E \left( \sup_{t\leq T}  \|v_t\|^p \right) + \|\hat{u}_x\|^p + \|\twp_{xx}\|^p\right) + E \left( \sup_{t\leq T} f^m(t) \right) \\
& \qquad {} + E \left( \sup_{t\leq T}  \|\sigma^m(t, (v_t+\tw_t)\mathds{1}_{(-L^m, L^m)}) \circ \Phi^m - \sigma(t, v_t+\tw_t)\|^p \right) \\
& \qquad {}  + E \left( \sup_{t\leq T}  \|\sigma(t,v_t+\tw_t)\|^p \right) \sup_k \sup_{x\in I^m_k} \|2m \sqrt{Q}\mathds{1}_{J^m_k} - q(x,\cdot)\|^p \bigg) \xrightarrow{m\rightarrow\infty}0
\end{align*}
and hence
\[	E \left( \sup_{t\leq T} \|v^m_t-v_t\|^p\right) \xrightarrow{m\rightarrow\infty}0.\]

\section*{Declarations}

\subsection*{Acknowledgements}
The work of E. Lang was supported by the DFG RTG 1845 and partially supported by the BMBF, FKZ01GQ1001B. The work of W. Stannat was supported by the BMBF, FKZ01GQ1001B.

\subsection*{Competing Interests} 
The authors declare that they have no competing interests.


\end{document}